\newcommand{\area}{\mathsf{area}}
\newcommand{\dinv}{\mathsf{dinv}}
\newcommand{\tdinv}{\mathsf{tdinv}}
\newcommand{\cdinv}{\mathsf{cdinv}}
\newcommand{\pdinv}{\mathsf{pdinv}}
\newcommand{\maxtdinv}{\mathsf{maxtdinv}}
\newcommand{\sweep}{\mathsf{sweep}}
\newcommand{\RP}{\mathsf{RP}} % Unlabelled rectangular paths
\newcommand{\RD}{\mathsf{RD}} % Unlabelled rectangular Dyck paths
\newcommand{\LRD}{\mathsf{LRD}} % Labelled rectangular Dyck paths
\newcommand{\LRP}{\mathsf{LRP}} % Labelled rectangular paths
\newcommand{\Ht}{\widetilde{H}}
\newcommand{\Z}{\mathbb{Z}}
\newcommand{\<}{\langle}
\renewcommand{\>}{\rangle}
\theoremstyle{plain}
\theoremstyle{definition}
\newtheorem{theorem}{Theorem}[section]
\newtheorem{conjecture}[theorem]{Conjecture}
\newtheorem{definition}[theorem]{Definition}
\newtheorem{lemma}[theorem]{Lemma}
\newtheorem{problem}[theorem]{Problem}
\theoremstyle{remark}
\newtheorem{example}[theorem]{Example}
\renewenvironment{proof}[1][\proofname]{%
	\par\pushQED{\qed}\normalfont%
	\topsep6\p@\@plus6\p@\relax
	\trivlist\item[\hskip\labelsep\bfseries#1\@addpunct{.}]%
	\ignorespaces
}{%
	\qedhere %\popQED\endtrivlist\@endpefalse
}
\DeclareRobustCommand*{\bfseries}{%
	\not@math@alphabet\bfseries\mathbf
	\fontseries\bfdefault\selectfont
	\boldmath
}
\title[Rectangular paths and Delta conjectures]{Rectangular analogues of the square paths conjecture and the univariate Delta conjecture}
\author{Alessandro Iraci \and Roberto Pagaria \and Giovanni Paolini \and Anna Vanden Wyngaerd}
\address{Alessandro Iraci \newline \textup{Universit\'e du Qu\'ebec \`a Montr\'eal, LACIM}\\ 201 Av. du Pr\'esident-Kennedy, Montr\'eal, QC H2X 3Y7 \\ Canada.}
\email{iraci.alessandro@uqam.ca}
\address{Roberto Pagaria \newline \textup{Università di Bologna, Dipartimento di Matematica}\\ Piazza di Porta San Donato 5 - 40126 Bologna\\ Italy.}
\email{roberto.pagaria@unibo.it}
\address{Giovanni Paolini \newline \textup{California Institute of Technology}\\ Pasadena CA, United States.}
\email{paolini@caltech.edu}
\address{Anna Vanden Wyngaerd  \newline \textup{IRIF, Université de Paris Cité}\\ France.}
\email{avw@irif.fr}
\begin{document}

\begin{abstract}
    In this paper, we extend the rectangular side of the shuffle conjecture by stating a rectangular analogue of the square paths conjecture. In addition, we describe a set of combinatorial objects and one statistic that are a first step towards a rectangular extension of (the rise version of) the Delta conjecture, and of (the rise version of) the Delta square conjecture, corresponding to the case $q=1$ of an expected general statement. We also prove our new rectangular paths conjecture in the special case when the sides of the rectangle are coprime.
\end{abstract}
\maketitle

\section{Introduction}

In the 90's, Garsia and Haiman set out to prove the Schur positivity of the (modified) Macdonald polynomials by showing them to be the bi-graded Frobenius characteristic of certain Garsia-Haiman modules \cite{Garsia-Haiman-PNAS-1993}. Their prediction was confirmed in 2001, when Haiman used the algebraic geometry of the Hilbert sheme to prove that the dimension of their modules equals $n!$ \cite{Haiman-nfactorial-2001}, thus proving the $n!$ theorem. In the course of these developments, it became clear that there were remarkable connections to be found between Macdonald polynomial theory and representation theory of the symmetric group. For example, during their quest for Macdonald positivity, Garsia and Haiman introduced the $\mathfrak{S}_n$-module of \emph{diagonal harmonics}, i.e.\ the coinvariants of the diagonal action of $\mathfrak{S}_n$ on polynomials in two sets of $n$ variables, and they conjectured that its Frobenius characteristic is given by $\nabla e_n$, where $\nabla$ is the \emph{nabla} operator on symmetric functions introduced in \cite{Bergeron-Garsia-Haiman-Tesler-Positivity-1999}, which acts diagonally on Macdonald polynomials. Haiman proved this conjecture in 2002 \cite{Haiman-Vanishing-2002}. 

The combinatorial side of things solidified when Haglund, Haiman, Loehr, Remmel, and Ulyanov then formulated the so called \emph{shuffle conjecture}  \cite{HHLRU-2005}, i.e.\ they predicted a combinatorial formula for $\nabla e_n$ in terms of labelled Dyck paths, which are lattice paths using North and East steps going from $(0,0)$ to $(n,n)$ and staying weakly above the line connecting these two points (called the \emph{main diagonal}). Several years later, Haglund, Morse and Zabrocki conjectured a \emph{compositional} refinement of the shuffle conjecture, which also specified all the points where the Dyck paths returns to main diagonal \cite{Haglund-Morse-Zabrocki-2012}. This was the statement later proved by Carlsson and Mellit in \cite{Carlsson-Mellit-ShuffleConj-2018}, implying the \emph{shuffle theorem}. 

Over the years, this subject has revealed itself to be extremely fruitful and to have striking connections to other fields of mathematics including elliptical Hall algebras, affine Hecke algebras, Springer fibers, the homology of torus knots and the shuffle algebra of symmetric functions. 

In this paper, we add a few (conjectural) formulas to the substantial list of variants and generalisations inspired by the the succes story of the shuffle theorem; that is, equations with a symmetric function related to Macdonald polynomials on one side and lattice paths combinatorics on the other. Furthermore, we support one of these conjectures by proving a non-trivial special case. 

One of the earliest shuffle-like formulas was conjectured in 2007 Loehr and Warrington \cite{Loehr-Warrington-square-2007}. They predicted an expression of $\nabla \omega(p_n)$ in terms of \emph{square paths}, i.e.\ lattice paths from $(0,0)$ to $(n,n)$ using only North and East steps and ending with an East step (without a the restriction of staying above the main diagonal). Their formula was proved by Sergel in \cite{Leven-2016} to be a consequence of the shuffle theorem. 

Next, Haglund, Remmel and Wilson formulated the \emph{Delta conjecture} \cite{Haglund-Remmel-Wilson-2018}, a pair of conjectures for the symmetric function $\Delta'_{e_{n-k-1}}e_n$ in terms of decorated Dyck paths, where $k$ decorations are placed on either \emph{rises} or \emph{valleys} of the path. The symmetric function operator $\Delta'_f$ acts diagonally on the Macdonald polynomials and generalises $\nabla$, in a sense. 
The rise version of the Delta conjecture was proved by D'Adderio and Mellit in \cite{DAdderioMellit2022CompositionalDelta}, using the compositional refinement in \cite{DAdderio-Iraci-VandenWyngaerd-Theta-2021}. In \cite{DAdderio-Iraci-VandenWyngaerd-DeltaSquare-2019}, the authors stated a \emph{Delta square conjecture} (still open at the moment), which extends (the rise version of) the Delta conjecture in the same fashion as the square paths theorem extends the shuffle theorem. The valley version also has similar extensions \cites{Qiu-Wilson-2020, Iraci-VandenWyngaerd-Valley-Square-2021}, but it lacks a compositional version and it is still open.

Around the same time as the formulation of the Delta conjecture, the story has been extended to rectangular Dyck paths: paths from $(0,0)$ to $(m,n)$ staying above the main diagonal. In \cite{Bergeron-Garsia-Sergel-Xin-2016}, building on the work in \cite{Gorsky-Negut-2015}, Bergeron, Garsia, Sergel, and Xin conjectured that a certain symmetric function related to the elliptic Hall algebra studied by Schiffmann and Vasserot \cite{Schiffmann-Vasserot-2011} can be expressed in terms of rectangular Dyck paths. Their prediction was recently proved by Mellit \cite{mellit2021toric}.

In this paper, we state a rectangular analogue of the square paths conjecture, where the combinatorial objects are lattice paths from $(0,0)$ to $(m,n)$ ending with an East step. Our main result is the proof the special case of our conjecture where the sides of the rectangle are coprime. Moreover, using the Theta operators (first introduced in \cite{DAdderio-Iraci-VandenWyngaerd-Theta-2021}), we conjecture the special case $q=1$ of a rectangular analogue of (the rise version of) the Delta conjecture and the Delta square conjecture, in terms of rectangular paths that lie above some horizontal translation of the \emph{broken diagonal}, a ``decorated'' analogue of the diagonal of the rectangle that turns out to be necessary to describe the right set of combinatorial objects.

\section{Symmetric functions}

For all the undefined notations and the unproven identities, we refer to \cite{DAdderioIraciVandenWyngaerd2022TheBible}*{Section~1}, where definitions, proofs and/or references can be found. 

We denote by $\Lambda$ the graded algebra of symmetric functions with coefficients in $\mathbb{Q}(q,t)$, and by $\<\, , \>$ the \emph{Hall scalar product} on $\Lambda$, defined by declaring that the Schur functions form an orthonormal basis.

The standard bases of the symmetric functions that will appear in our calculations are the monomial $\{m_\lambda\}_{\lambda}$, complete $\{h_{\lambda}\}_{\lambda}$, elementary $\{e_{\lambda}\}_{\lambda}$, power $\{p_{\lambda}\}_{\lambda}$ and Schur $\{s_{\lambda}\}_{\lambda}$ bases.

For a partition $\mu \vdash n$, we denote by \[ \Ht_\mu \coloneqq \Ht_\mu[X] = \Ht_\mu[X; q,t] = \sum_{\lambda \vdash n} \widetilde{K}_{\lambda \mu}(q,t) s_{\lambda} \] the \emph{(modified) Macdonald polynomials}, where \[ \widetilde{K}_{\lambda \mu} \coloneqq \widetilde{K}_{\lambda \mu}(q,t) = K_{\lambda \mu}(q,1/t) t^{n(\mu)} \] are the \emph{(modified) Kostka coefficients} (see \cite{Haglund-Book-2008}*{Chapter~2} for more details). 

Macdonald polynomials form a basis of the algebra of symmetric functions $\Lambda$. This is a modification of the basis introduced by Macdonald \cite{Macdonald-Book-1995}.

If we identify the partition $\mu$ with its Ferrer diagram, i.e.\ with the collection of cells $\{(i,j)\mid 1\leq i\leq \mu_j, 1\leq j\leq \ell(\mu)\}$, then for each cell $c\in \mu$ we refer to the \emph{arm}, \emph{leg}, \emph{co-arm} and \emph{co-leg} (denoted respectively by $a_\mu(c), l_\mu(c), a_\mu'(c), l_\mu'(c)$) as the number of cells in $\mu$ that are strictly to the right, below, to the left and above $c$ in $\mu$, respectively (see Figure~\ref{fig:notation}).

\begin{figure}
	\centering
    \begin{tikzpicture}[scale=0.4]
		\draw[gray,opacity=.6](0,0) grid (15,10);
		\fill[white] (1,-0.1)|-(3,1) |- (6,3) |- (9,7) |- (15.1,8) |- (1,-.1);
		\fill[blue, opacity=.15] (0,7) rectangle (9,8) (3,3) rectangle (4,10); 
		\fill[blue, opacity=.5] (3,7) rectangle (4,8);
		\draw (6,7.5) node {\tiny{Arm}} (3.5,5) node[rotate=90] {\tiny{Leg}} (3.5, 9) node[rotate = 90] {\tiny{Co-leg}} (1.5,7.5) node {\tiny{Co-arm}} ;
	\end{tikzpicture}
	\caption{Arm, leg, co-arm, and co-leg of a cell of a partition.}
	\label{fig:notation}
\end{figure}
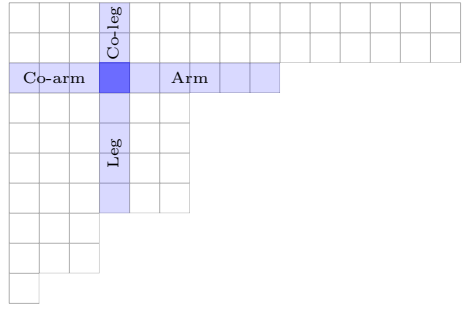

Let $M \coloneqq (1-q)(1-t)$. For every partition $\mu$, we define the following constants:

\[
	B_{\mu} \coloneqq B_{\mu}(q,t) = \sum_{c \in \mu} q^{a_{\mu}'(c)} t^{l_{\mu}'(c)}, \qquad
	\Pi_{\mu} \coloneqq \Pi_{\mu}(q,t) = \prod_{c \in \mu / (1)} (1-q^{a_{\mu}'(c)} t^{l_{\mu}'(c)}). \]

We will make extensive use of the \emph{plethystic notation} (cf. \cite{Haglund-Book-2008}*{Chapter~1}). We also need several linear operators on $\Lambda$.

\begin{definition}[\protect{\cite{Bergeron-Garsia-ScienceFiction-1999}*{3.11}}]
	\label{def:nabla}
	We define the linear operator $\nabla \colon \Lambda \rightarrow \Lambda$ on the eigenbasis of Macdonald polynomials as \[ \nabla \Ht_\mu = e_{\lvert \mu \rvert}[B_\mu] \Ht_\mu. \]
\end{definition}

\begin{definition}
	\label{def:pi}
	We define the linear operator $\mathbf{\Pi} \colon \Lambda \rightarrow \Lambda$ on the eigenbasis of Macdonald polynomials as \[ \mathbf{\Pi} \Ht_\mu = \Pi_\mu \Ht_\mu \] where we conventionally set $\Pi_{\varnothing} \coloneqq 1$.
\end{definition}

\begin{definition}
	\label{def:delta}
	For $f \in \Lambda$, we define the linear operators $\Delta_f, \Delta'_f \colon \Lambda \rightarrow \Lambda$ on the eigenbasis of Macdonald polynomials as \[ \Delta_f \Ht_\mu = f[B_\mu] \Ht_\mu, \qquad \qquad \Delta'_f \Ht_\mu = f[B_\mu-1] \Ht_\mu. \]
\end{definition}

Observe that on the vector space of homogeneous symmetric functions of degree $n$, denoted by $\Lambda^{(n)}$, the operator $\nabla$ equals $\Delta_{e_n}$.

\begin{definition}[\protect{\cite{DAdderio-Iraci-VandenWyngaerd-Theta-2021}*{(28)}}]
	\label{def:theta}
	 For any symmetric function $f \in \Lambda^{(n)}$ we define the \emph{Theta operators} on $\Lambda$ in the following way: for every $F \in \Lambda^{(m)}$ we set
	\begin{equation*}
		\Theta_f F  \coloneqq 
		\left\{\begin{array}{ll}
			0 & \text{if } n \geq 1 \text{ and } m=0 \\
			f \cdot F & \text{if } n=0 \text{ and } m=0 \\
			\mathbf{\Pi} f \left[\frac{X}{M}\right] \mathbf{\Pi}^{-1} F & \text{otherwise}
		\end{array}
		\right. ,
	\end{equation*}
and we extend by linearity the definition to any $f, F \in \Lambda$.
\end{definition}

It is clear that $\Theta_f$ is linear. In addition, if $f$ is homogeneous of degree $k$, then so is $\Theta_f$:
\[\Theta_f \Lambda^{(n)} \subseteq \Lambda^{(n+k)} \qquad \text{ for } f \in \Lambda^{(k)}. \]

Finally, we need to refer to \cite{Bergeron-Garsia-Sergel-Xin-2016}*{Algorithm~4.1} (see also \cite{Bergeron-Garsia-Sergel-Xin-Remarkable-2016}*{Definition~1.1, Theorem~2.5}).

\begin{definition}
    Let $m, n > 0$. Let $a,b,c,d \in \mathbb{N}$ such that $a+c=m$, $b+d=n$, $ad-bc = \gcd(m,n)$. We recursively define $Q_{m,n}$ as an operator on $\Lambda$ by \[ Q_{m,n} = \frac{1}{M} \left( Q_{c,d} Q_{a,b} - Q_{a,b} Q_{c,d} \right), \] with base cases \[ Q_{1,0} = D_0 = \mathsf{id} - M \Delta_{e_1} \quad \text{ and } \quad Q_{0,1} = - \underline{e_1} \] (where $\underline{f}$ is the multiplication by $f$).
\end{definition}

\begin{definition}
    For a coprime pair $(a,b)$ and $f \in \Lambda^{(d)}$, we define $F_{a,b}(f)$ as follows. Let \[ f = \sum_{\lambda \vdash d} c_\lambda(q,t) \left( \frac{qt}{qt-1} \right)^{\ell(\lambda)} h_\lambda \left[ \frac{1-qt}{qt} X \right]. \] Then, we define \[ F_{a,b}(f) \coloneqq \sum_{\lambda \vdash d} c_\lambda(q,t) \prod_{i=1}^{\ell(\lambda)} Q_{\lambda_i a, \lambda_i b}(1). \]
\end{definition}

For our convenience, we use the shorthands \[ e_{m,n} \coloneqq F_{a,b}(e_d), \qquad p_{m,n} \coloneqq F_{a,b}(p_d) \] where $m = ad, n = bd$, and $\gcd(a,b) = 1$. Beware: $e_{4,2} = F_{2,1}(e_2)$, but $e_{42} = e_4 e_2$. 

\section{Combinatorial definitions}

The objects we are concerned with are \emph{rectangular Dyck paths} and \emph{rectangular paths}. All the following definitions are classical for rectangular Dyck paths \cite{Bergeron-Garsia-Sergel-Xin-2016} and new for rectangular paths.

\subsection{Rectangular paths}

\begin{definition}
    A \emph{rectangular path} of size $m \times n$ is a lattice path composed of unit North and East steps, going from $(0,0)$ to $(m,n)$, and ending with an East step. A \emph{rectangular Dyck path} is a rectangular path that lies weakly above the diagonal $my = nx$ (called \emph{main diagonal}).
\end{definition}

\begin{figure}
    \centering
    \begin{tikzpicture}[scale=0.6]
    	\draw[gray!60, thin] (0,0) grid (7,9) (11/9,0) -- (7+11/9,9);
    				
    	\draw[blue!60, line width=2pt] (0,0) -- (0,1) -- (1,1) -- (2,1) -- (2,2) -- (2,3) -- (2,4) -- (3,4) -- (4,4) -- (4,5) -- (4,6) -- (4,7) -- (5,7) -- (6,7) -- (6,8) -- (6,9) -- (7,9);
        \draw[dashed, gray!60] (0,0) -- (7,9);
    \end{tikzpicture}
    \caption{A $7 \times 9$ rectangular path with its base diagonal and the main diagonal (dashed).}\label{fig:rectangular-path}
\end{figure}
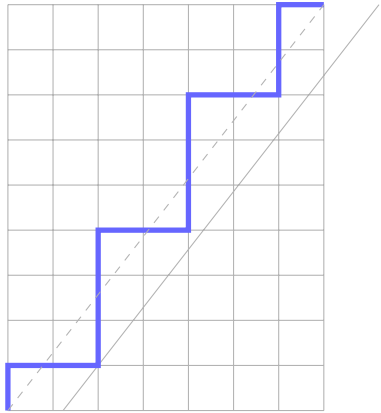

We denote the sets of rectangular paths and rectangular Dyck paths of size $m \times n$ as $\RP(m,n)$ and $\RD(m,n)$, respectively.

\begin{definition}
    For a $m \times n$ rectangular path $\pi$, let $a_i$ be the (signed) horizontal distance between the starting point of the $i$-th North step and the main diagonal. We define the \emph{area word} of the path to be the sequence $(a_1, \dots, a_n)$. Set $s \coloneqq - \min \{a_i \mid 1 \leq i \leq n\}$, which we call the \emph{shift} of the path. Note that $s = 0$ if $\pi$ is a rectangular Dyck path, and $s > 0$ otherwise.
\end{definition}

\begin{definition}
    We call the diagonal $my = n(x-s)$, which is the lowest diagonal that intersects the path, the \emph{base diagonal}.
\end{definition}

\begin{definition}
    The \emph{area} of a rectangular path $\pi$ is $\area(\pi) \coloneqq \sum_{i=1}^{n} \lfloor a_i + s \rfloor$. This is the number of whole squares that lie entirely between the path $\pi$ and its base diagonal.
\end{definition}
For example, the path in Figure~\ref{fig:rectangular-path} has area word
\[\left(0,-\frac{11}{9}, -\frac{4}{9}, \frac{1}{3}, -\frac{8}{9}, -\frac{1}{9}, \frac{2}{3}, -\frac{5}{9}, \frac{2}{9}\right) \approx
(0, \, -1.22,\, -0.44, \, 0.33, \, -0.88, \, -0.11, \, 0.66, \, -0.55, \, 0.22). \]
Thus, its shift is $\frac{11}{9}$ and its area is $5$.

\subsection{Decorated rectangular paths}

In a similar fashion as the rise version of the Delta conjecture \cite{Haglund-Remmel-Wilson-2018} (which is now a theorem \cites{DAdderioMellit2022CompositionalDelta,Blasiak-Haiman-Morse-Pun-Seeling-Extended-Delta-2021}), we introduce the concept of \emph{decorated rises} for rectangular paths.

\begin{definition}
    The \emph{rises} of a rectangular path are the indices of the rows containing a North step that immediately follows another North step. A \emph{decorated rectangular path} is a rectangular path with a given subset $dr$ of its rises.
\end{definition}

\begin{definition}
    For a decorated rectangular path of size $(m+k) \times (n+k)$ with $k$ decorated rises, we define the \emph{broken diagonal} to be the broken segment built as follows. Let $(x_1, y_1) = (0,0)$, then for $1 \leq i < n+k$, define
    \begin{equation*}
		(x_{i+1}, y_{i+1}) =
		\left\{\begin{array}{ll}
			(x_i + \frac{m}{n}, y_i+1) & \text{if } i \not \in dr \\
			(x_i+1, y_i+1) & \text{if } i \in dr. \\
		\end{array}
        \right.
	\end{equation*}
    The broken diagonal is the broken segment joining $(x_i, y_i)$ and $(x_{i+1}, y_{i+1})$ for all $i$, that is, the line that starts at $(0,0)$ and the proceeds with slope $\frac{n}{m}$ in rows not containing decorated rises, and with slope $1$ in rows that contain decorated rises.
\end{definition}

Note that, if the path has no decorated rises, then the broken diagonal coincides with the main diagonal.

\begin{definition}
    We define a \emph{decorated rectangular Dyck path} to be a decorated rectangular path that lies weakly above the broken diagonal.
\end{definition}

See Figure~\ref{fig:decorated-dyck} for an example of such a path. We use a $\ast$ to mark the decorated rises.

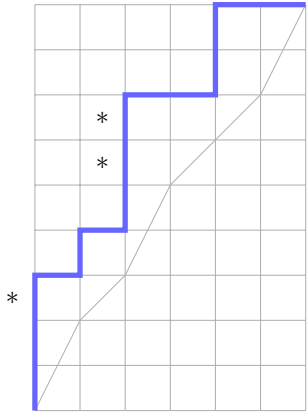
\begin{figure}
    \centering
    \begin{tikzpicture}[scale=.6]
        \draw[step=1.0, gray!60, thin] (0,0) grid (6,9);
    
        \begin{scope}
            \clip (0,0) rectangle (6,9);
            \draw[gray!60, thin] (0,0) -- (0/1, 0) -- (1/2, 1) -- (1/1, 2) -- (2/1, 3) -- (5/2, 4) -- (3/1, 5) -- (4/1, 6) -- (5/1, 7) -- (11/2, 8) -- (6/1, 9);
        \end{scope}
    
        \draw[blue!60, line width=2pt] (0,0) -- (0,1) -- (0,2) -- (0,3) -- (1,3) -- (1,4) -- (2,4) -- (2,5) -- (2,6) -- (2,7) -- (3,7) -- (4,7) -- (4,8) -- (4,9) -- (5,9) -- (6,9);

        \draw (-0.5,2.5) node {$\ast$};
        \draw (1.5,5.5) node {$\ast$};
        \draw (1.5,6.5) node {$\ast$};
    \end{tikzpicture}
    
    \caption{A decorated rectangular Dyck path with its broken diagonal.}
    \label{fig:decorated-dyck}
\end{figure}

The definitions of \emph{area word} and \emph{area} extend to decorated paths as well, using the broken diagonal in place of the main diagonal.

\begin{definition}
    For a $(m+k) \times (n+k)$ decorated rectangular path $(\pi, dr)$ with $k$ decorated rises, let $a_i$ be the horizontal distance between the starting point of the $i$-th North step and the broken diagonal. We define the \emph{area word} of the path as the sequence $a_1, \dots, a_{n+k}$. We define $s \coloneqq - \min \{a_i \mid 1 \leq i \leq n+k\}$ to be the shift of the path.
\end{definition}

\begin{definition}
    We define the \emph{area} of a decorated rectangular path $\pi$ as \[ \area(\pi) \coloneqq \sum_{i \not \in dr} \lfloor a_i + s \rfloor. \]
\end{definition}
The area of the path in Figure~\ref{fig:decorated-dyck} is equal to $3$.

\subsection{Labelled paths}

Finally, we need to introduce labelled objects.

\begin{definition}
    A \emph{labelling} of a (decorated) rectangular (Dyck) path is an assignment of a positive integer label to each North step of the path, such that consecutive North steps are assigned strictly increasing labels. A \emph{labelled (decorated) rectangular (Dyck) path} is a (decorated) rectangular (Dyck) path together with a labelling.
\end{definition}

We say that a labelling is \emph{standard} if the set of labels is $[n] \coloneqq \{1, \dots, n\}$, where $n$ is the height of the path.

\begin{figure}
    \centering
    \begin{minipage}{.45\textwidth}
        \centering
        \begin{tikzpicture}[scale=0.6]
            \draw[gray!60, thin] (0,0) grid (7,9) (11/9,0) -- (7+11/9,9);
    				
            \draw[blue!60, line width = 1.6pt] (0,0) -- (0,1) -- (1,1) -- (2,1) -- (2,2) -- (2,3) -- (2,4) -- (3,4) -- (3,5) -- (3,6) -- (3,7) -- (4,7) -- (5,7) -- (5,8) -- (5,9) -- (6,9) -- (7,9);
            
            \draw
            (0.5,0.5) circle (0.4 cm) node {$1$}
            (2.5,1.5) circle (0.4 cm) node {$1$}
            (2.5,2.5) circle (0.4 cm) node {$4$}
            (2.5,3.5) circle (0.4 cm) node {$7$}
            (3.5,4.5) circle (0.4 cm) node {$2$}
            (3.5,5.5) circle (0.4 cm) node {$4$}
            (3.5,6.5) circle (0.4 cm) node {$7$}
            (5.5,7.5) circle (0.4 cm) node {$1$}
            (5.5,8.5) circle (0.4 cm) node {$2$};
        \end{tikzpicture}
    \end{minipage}%
    \begin{minipage}{.45\textwidth}
        \centering
        \begin{tikzpicture}[scale=.6]
            \draw[step=1.0, gray!60, thin] (0,0) grid (7,9);
        
            \begin{scope}
                \clip (0,0) rectangle (7,9);
                \draw[gray!60, thin] (0,0) -- (0/1, 0) -- (2/3, 1) -- (4/3, 2) -- (7/3, 3) -- (3/1, 4) -- (11/3, 5) -- (14/3, 6) -- (17/3, 7) -- (19/3, 8) -- (7/1, 9);
            \end{scope}
        
            \draw[blue!60, line width=2pt] (0,0) -- (0,1) -- (0,2) -- (0,3) -- (1,3) -- (1,4) -- (2,4) -- (2,5) -- (2,6) -- (2,7) -- (3,7) -- (4,7) -- (4,8) -- (4,9) -- (5,9) -- (6,9) -- (7,9);
        
            \draw (0.5,0.5) circle (0.4cm) node {$1$};
            \draw (0.5,1.5) circle (0.4cm) node {$2$};
            \draw (0.5,2.5) circle (0.4cm) node {$4$};
            \draw (1.5,3.5) circle (0.4cm) node {$7$};
            \draw (2.5,4.5) circle (0.4cm) node {$2$};
            \draw (2.5,5.5) circle (0.4cm) node {$4$};
            \draw (2.5,6.5) circle (0.4cm) node {$7$};
            \draw (4.5,7.5) circle (0.4cm) node {$1$};
            \draw (4.5,8.5) circle (0.4cm) node {$2$};
            \draw (-0.5,2.5) node {$\ast$};
            \draw (1.5,5.5) node {$\ast$};
            \draw (1.5,6.5) node {$\ast$};
        \end{tikzpicture}      
    \end{minipage}
        \caption{A $7 \times 9$ labelled rectangular path (left) and labelled decorated Dyck path (right).}
    \end{figure}
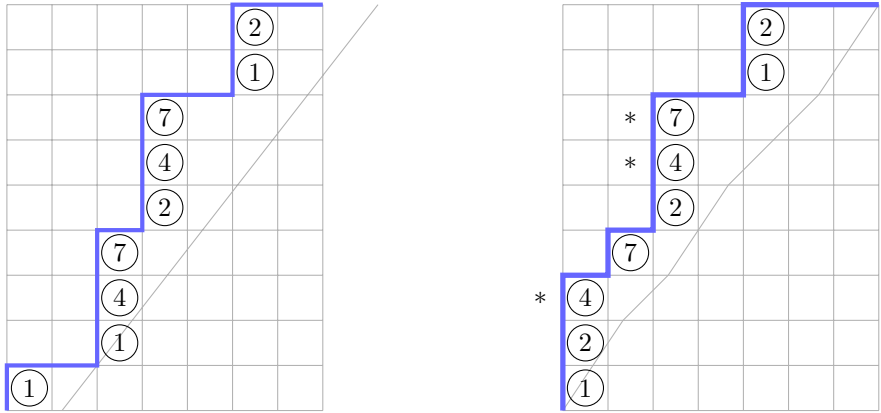

We denote by $w_i$ the label assigned to the $i$-th North step of the path. 

We also denote the sets of labelled rectangular paths and labelled rectangular Dyck paths of size $m \times n$ as $\LRP(m,n)$ and $\LRD(m,n)$ respectively, and the sets of labelled decorated rectangular paths and labelled decorated rectangular Dyck paths of size $(m+k) \times (n+k)$ with $k$ decorated rises as $\LRP(m+k,n+k)^{\ast k}$ and $\LRD(m+k,n+k)^{\ast k}$, respectively.

\begin{definition}
    Given a labelled (decorated) rectangular (Dyck) path $(\pi, dr, w)$, we define $x^w = \prod_i x_{w_i}$. With an abuse of notation, we will sometimes write $\pi$ to mean $(\pi, dr, w)$, in which case we will have $x^\pi = x^w$.
\end{definition}

Given a rectangular (Dyck) path $\pi$, the cells in the rectangular grid going from $(0,0)$ to $(m,n)$ that lie above the path form the Ferrer's diagram of a partition $\mu(\pi)$.

Here we extend the definition of dinv given in \cite{Bergeron-Garsia-Sergel-Xin-2016} (see also \cite{mellit2021toric}) for rectangular Dyck paths to any rectangular path. We will describe it in two different ways.

\begin{definition}
    Let $\pi$ be a $m \times n$ rectangular path, and let $1 \leq i, j \leq n$. We say that $i$ \emph{attacks} $j$ in $\pi$ (or $(i,j)$ is an \emph{attack relation} for $\pi$) if \[ (a_i, i) <_{\text{lex}} (a_j, j) <_{\text{lex}} (a_i + {\textstyle\frac{m}{n}}, i). \]
\end{definition}

At this point, we can define the dinv of an unlabelled path.

\begin{definition}
    We define the \emph{path dinv} of a rectangular path $\pi$ as \[ \pdinv(\pi) \coloneqq \# \left\{ c \in \mu(\pi) \mid \textstyle\frac{a}{\ell+1} \leq \frac{m}{n} < \frac{a+1}{\ell} \right\} \] where $a = a_\mu(c)$ and $\ell = \ell_\mu(c)$, and the second inequality always holds if $\ell = 0$.
\end{definition}

For labelled paths, we need some extra steps.

\begin{definition}
    We define the \emph{temporary dinv} of a labelled rectangular path $(\pi, w)$ as 
    \[ \tdinv(\pi) \coloneqq \# \{ 1 \leq i, j \leq n \mid w_i < w_j \text{ and } i \text{ attacks } j \}. \]
\end{definition}

\begin{definition}
    We define the \emph{maximal temporary dinv} of a rectangular path $\pi$ as 
    \[ \maxtdinv(\pi) \coloneqq \# \{ 1 \leq i, j \leq n \mid i \text{ attacks } j \}. \] Note that this is the same as $\max\{\tdinv(\pi, w) \mid w \in W(\pi)\}$, where $W(\pi)$ is the set of all possible labellings of $\pi$.
\end{definition}

The following is a simpler description for the difference $\pdinv(\pi) - \maxtdinv(\pi)$, given in \cite{Hicks-Sergel-2015}.

\begin{definition}
    We define the \emph{dinv correction} of a rectangular path $\pi$ as \[ \cdinv(\pi) \coloneqq \# \left\{ c \in \mu(\pi) \mid \textstyle\frac{a+1}{\ell+1} \leq \frac{m}{n} < \frac{a}{\ell} \right\} - \# \left\{ c \in \mu(\pi) \mid \textstyle\frac{a}{\ell} \leq \frac{m}{n} < \frac{a+1}{\ell+1} \right\}, \] where $a = a_\mu(c)$ and $\ell = \ell_\mu(c)$.
\end{definition}

We will provide a visual interpretation for the $\tdinv$ and $\cdinv$ later in the section.

\begin{theorem}[\cite{Hicks-Sergel-2015}*{Theorem~2}]
    \label{thm:cdinv-dyck}
    For any rectangular Dyck path $\pi$, we have \[ \cdinv(\pi) = \pdinv(\pi) - \maxtdinv(\pi). \]
\end{theorem}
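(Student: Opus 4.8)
The plan is to express all three quantities as counts of cells of the partition $\mu := \mu(\pi)$. Two of them, $\pdinv$ and $\cdinv$, are already (signed) cell counts by definition, so the real content is a cell-count formula for $\maxtdinv$; once that is in hand, the identity reduces to an elementary interval computation. Precisely, I would first establish the auxiliary formula
\[
\maxtdinv(\pi) \;=\; \#\{\, c \in \mu \mid \tfrac{a}{\ell+1} \le \tfrac{m}{n} < \tfrac{a+1}{\ell+1} \,\} \;+\; \#\{\, c \in \mu \mid \tfrac{a}{\ell} \le \tfrac{m}{n} < \tfrac{a+1}{\ell} \,\},
\]
where $a$ and $\ell$ denote the arm and leg of $c$ (with the natural conventions at $\ell = 0$, i.e.\ $\tfrac{a+1}{0} = +\infty$ and $\tfrac{0}{0} = 0$). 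In words, a cell contributes one attack relation for each of its two ``windows'' $[\tfrac{a}{\ell+1},\tfrac{a+1}{\ell+1})$, $[\tfrac{a}{\ell},\tfrac{a+1}{\ell})$ that contains the diagonal slope $\tfrac mn$; a ``tall'' cell, both of whose windows contain $\tfrac mn$, is counted twice, and this overcount is exactly what $\cdinv$ corrects.

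Granting the auxiliary formula, the theorem follows cell by cell. Fix $c \in \mu$ and compare the interval $I := [\tfrac{a}{\ell+1}, \tfrac{a+1}{\ell})$ appearing in $\pdinv$ with the intervals $I_+ := [\tfrac{a+1}{\ell+1}, \tfrac{a}{\ell})$ and $I_- := [\tfrac{a}{\ell}, \tfrac{a+1}{\ell+1})$ appearing in $\cdinv$; the relative order of the four fractions $\tfrac a{\ell+1}, \tfrac{a+1}{\ell+1}, \tfrac a\ell, \tfrac{a+1}\ell$ depends only on $\operatorname{sign}(a-\ell)$, and a short case check gives, in every case,
\[
\mathbf{1}[\tfrac mn \in I] \;-\; \bigl( \mathbf{1}[\tfrac mn \in I_+] - \mathbf{1}[\tfrac mn \in I_-] \bigr) \;=\; \mathbf{1}\bigl[\tfrac mn \in [\tfrac a{\ell+1}, \tfrac{a+1}{\ell+1})\bigr] + \mathbf{1}\bigl[\tfrac mn \in [\tfrac a\ell, \tfrac{a+1}\ell)\bigr].
\]
(For $a > \ell$ the bracket on the left deletes $I_+$ from $I$; for $a < \ell$ it adds $I_-$; for $a = \ell$ both are empty; the degenerate cell with $a = \ell = 0$ behaves like the case $a < \ell$ exactly when $m < n$.) Summing over $c \in \mu$ yields $\pdinv(\pi) - \cdinv(\pi) = \maxtdinv(\pi)$, which is the claim.

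It remains to prove the auxiliary formula, and this is where the difficulty lies. Writing the area word as $a_i = \tfrac{m(i-1)}{n} - x_i$ with $x_i$ the column of the $i$-th North step, the attack condition $(a_i,i) <_{\mathrm{lex}} (a_j,j) <_{\mathrm{lex}} (a_i + \tfrac mn, i)$ turns into a comparison of $x_i, x_j, i, j$ against multiples of $\tfrac mn$; I would attach to such a comparison a cell of $\mu$ — essentially the cell lying in the row of one of the two North steps and the column of the other — together with a mode recording whether the controlling window is the one scaled by $\ell$ or by $\ell+1$, and then prove that the resulting map from attack relations to pairs (cell, admissible mode) is a bijection, the lex tie-breaks and the $\ell = 0$ conventions pinning down the closed/open endpoints exactly. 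The main obstacle is precisely this: identifying the right cell attached to each attack relation and checking bijectivity including all boundary cases — exact equalities $a_j = a_i + \tfrac mn$, cells with $\ell = 0$ or $a = \ell$, and the degenerate corner cell. This is also where the hypothesis that $\pi$ is a rectangular \emph{Dyck} path enters, the nonnegativity of the area word being what makes the cell assignment land inside $\mu(\pi)$. Everything downstream of the bijection is the routine arithmetic described above.
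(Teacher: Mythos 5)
A preliminary remark: the paper does not prove this statement at all — it is quoted from Hicks--Sergel (Theorem~2 of that paper) and used as a black box in the proof of \Cref{thm:cdinv} — so there is no internal proof to compare yours against; your attempt has to be judged as a standalone proof, and as such it has a genuine gap.

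The gap is the auxiliary formula expressing $\maxtdinv(\pi)$ as a cell count, namely $\maxtdinv(\pi) = \#\{c \in \mu \mid \frac{a}{\ell+1} \le \frac{m}{n} < \frac{a+1}{\ell+1}\} + \#\{c \in \mu \mid \frac{a}{\ell} \le \frac{m}{n} < \frac{a+1}{\ell}\}$. Everything downstream of it is indeed routine: the interval case-check for $a>\ell$, $a<\ell$, $a=\ell$ and the $\ell=0$ conventions does give $\pdinv-\cdinv$ equal to the right-hand side, and the formula is consistent with small examples. But the formula itself carries essentially all the content of the theorem, and you only announce a strategy for it: ``I would attach to such a comparison a cell of $\mu$ \dots and then prove that the resulting map is a bijection'' is a plan, not an argument. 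You do not specify which cell is attached to an attack relation $(i,j)$, nor which of the two ``modes'' it receives, nor why the map is well defined, injective and surjective, nor how the lexicographic tie-breaks and the degenerate cases ($\ell=0$, $a=\ell$, the corner cell, exact equality $a_j = a_i + \frac{m}{n}$) produce exactly the half-open windows you wrote — and you yourself flag this as ``the main obstacle.'' Moreover, the point where the Dyck hypothesis must enter is precisely the point left open: for general rectangular paths the identity fails and acquires the two correction terms of \Cref{thm:cdinv} counting steps below the main diagonal, so a correct bijection must use nonnegativity of the area word in an essential way; saying that nonnegativity ``makes the cell assignment land inside $\mu(\pi)$'' presupposes the assignment you have not constructed. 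As written, the proposal reduces the theorem to an unproved combinatorial claim rather than proving it.
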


We extend this result to all rectangular paths, without the restriction of lying above the main diagonal.

\begin{theorem}
    \label{thm:cdinv}
    For any rectangular path $\pi$, we have \[ \cdinv(\pi) = \pdinv(\pi) - \maxtdinv(\pi) - \# \{ i \mid a_i(\pi) < 0 \} - \# \left\{ i \mid a_i(\pi) < -\frac{m}{n} \right\}. \]
\end{theorem}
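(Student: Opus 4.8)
The plan is to reduce the statement to a purely combinatorial comparison of $\maxtdinv$, exploiting the fact that $\pdinv$ and $\cdinv$ depend only on the partition $\mu(\pi)$ and the ratio $\tfrac{m}{n}$ — both are (signed) counts over the cells of $\mu(\pi)$ of conditions involving only the arm, the leg, and $\tfrac{m}{n}$ — whereas $\maxtdinv$ is sensitive to the actual area word. First I would note that, since $\mu \coloneqq \mu(\pi)$ is a fixed finite partition, for every $k \geq 1$ there is a unique rectangular path $\pi^{(k)}$ in the $km \times kn$ box with $\mu(\pi^{(k)}) = \mu$, namely the one hugging the top-left corner, and that $\pi^{(k)}$ is a rectangular Dyck path once $k$ is large enough (the ``staircase'' corners of $\mu$ eventually lie above the line of slope $\tfrac{n}{m}$). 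Letting $k_0$ be the smallest such $k$, we have $\pi^{(1)} = \pi$, and \Cref{thm:cdinv-dyck} applied to $\pi^{(k_0)}$, together with $\pdinv(\pi^{(k_0)}) = \pdinv(\pi)$ and $\cdinv(\pi^{(k_0)}) = \cdinv(\pi)$, gives $\maxtdinv(\pi^{(k_0)}) = \pdinv(\pi) - \cdinv(\pi)$; moreover the two correction terms vanish for $\pi^{(k_0)}$ because its area word is nonnegative. Hence it suffices to show that $\maxtdinv(\pi^{(k)}) + \#\{i \mid a_i(\pi^{(k)}) < 0\} + \#\{i \mid a_i(\pi^{(k)}) < -\tfrac{m}{n}\}$ is independent of $k$.

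The bridge between $\pi^{(k)}$ and $\pi^{(k+1)}$ is the following area-word identity, immediate from the corner-hugging description: the area word of $\pi^{(k+1)}$ is obtained by prepending the block $\big(0, \tfrac{m}{n}, \tfrac{2m}{n}, \dots, \tfrac{(n-1)m}{n}\big)$ to the area word of $\pi^{(k)}$ with every entry increased by $m$ (enlarging the box inserts $n$ new North steps up the edge $x=0$ and pushes every old North step right by the full width $m$). A uniform shift and an order-preserving relabelling do not change the attack relations among the ``old'' steps, and the $n$ ``new'' steps do not attack each other (their area values are spaced exactly $\tfrac{m}{n}$ apart, and in a pair at distance $\tfrac{m}{n}$ the later index carries the larger area value, so the pair is not an attack relation). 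Therefore $\maxtdinv(\pi^{(k+1)}) - \maxtdinv(\pi^{(k)})$ is exactly the number of attack relations of $\pi^{(k+1)}$ that join a new step to an old step.

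The heart of the matter is counting these mixed attack relations. Expanding the definition of attack, with its lexicographic tie-breaks, a new step of area value $\tfrac{(i-1)m}{n}$ and an old step of (shifted) area value $a_r + m$ form an attack relation of $\pi^{(k+1)}$ precisely when $a_r + m \in \big[\tfrac{(i-2)m}{n}, \tfrac{im}{n}\big)$, i.e.\ when $i \in \big(\tfrac{n}{m}(a_r + m),\, \tfrac{n}{m}(a_r + m) + 2\big]$. This half-open interval of length $2$ always contains exactly two integers, so the number of mixed attack relations involving the old step $r$ is the number of those two candidates that fall in $\{1, \dots, n\}$. Working through the cases, that number is $2$ when $-m \leq a_r < -\tfrac{m}{n}$, it is $1$ when $-\tfrac{m}{n} \leq a_r < 0$ or when $-m - \tfrac{m}{n} \leq a_r < -m$, and it is $0$ otherwise. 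Summing over $r$ and using that $a_i(\pi^{(k+1)}) < 0$ (resp.\ $< -\tfrac{m}{n}$) exactly when the corresponding old entry is $< -m$ (resp.\ $< -m - \tfrac{m}{n}$), this sum equals $\big(\#\{i \mid a_i(\pi^{(k)}) < 0\} + \#\{i \mid a_i(\pi^{(k)}) < -\tfrac{m}{n}\}\big) - \big(\#\{i \mid a_i(\pi^{(k+1)}) < 0\} + \#\{i \mid a_i(\pi^{(k+1)}) < -\tfrac{m}{n}\}\big)$, which is precisely the required invariance. Telescoping from $k = 1$ to $k = k_0$ and invoking \Cref{thm:cdinv-dyck} then yields the theorem.

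The step I expect to be the real obstacle is the mixed-attack count in the third paragraph. The ``two candidates, clipped to $\{1, \dots, n\}$'' picture is transparent generically, but one must check that the lexicographic tie-breaks in the definition of attack — together with the boundary cases in which $a_r$ is an integer multiple of $\tfrac{m}{n}$, so that a new and an old area value coincide or differ by exactly $\tfrac{m}{n}$ — still produce the stated counts, and that the clipped-off candidates correspond to exactly the two correction summands, with no off-by-one errors when $n$ is small or when $n \mid m$. The remaining ingredients — the reduction to corner-hugging paths, the area-word identity, and the telescoping — are routine.
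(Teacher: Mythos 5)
Your argument is correct and is essentially the paper's own proof: your $\pi^{(2)}$ is exactly the path $\pi'$ used there (prepend $n$ North steps, append $m$ East steps), and your mixed-attack count—which for $k=1$ reduces, since $a_i(\pi) > -m$ always, to one new attacker for each $i$ with $a_i < 0$ and one new target for each $i$ with $a_i < -\frac{m}{n}$—is the same computation carried out in the paper before invoking \Cref{thm:cdinv-dyck}. The only cosmetic difference is that your telescoping collapses after a single step, because $\pi^{(k)}$ is already a rectangular Dyck path for every $k \geq 2$, so $k_0 \leq 2$.
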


\begin{proof}
    Let $\pi'$ be the path obtained from $\pi$ by adding $n$ North steps at the beginning, and $m$ East steps at the end. By construction, $\mu(\pi') = \mu(\pi)$ and the slope is the same, so $\cdinv(\pi') = \cdinv(\pi)$. By \Cref{thm:cdinv-dyck}, this quantity is also equal to $\pdinv(\pi') - \maxtdinv(\pi')$. But again, $\pdinv(\pi)$ only depends on $\mu(\pi)$, so $\pdinv(\pi') = \pdinv(\pi)$.

    We only need to compare $\maxtdinv(\pi)$ and $\maxtdinv(\pi')$. It is immediate that $(i,j)$ is an attack relation in $\pi$ if and only if $(n+i, n+j)$ is an attack relation in $\pi'$, so we only need to count attack relations in $\pi'$ where either $i \leq n$ or $j \leq n$. Since the first $n$ steps of $\pi'$ are all North steps by construction, we cannot possibly have attack relations where both $i$ and $j$ are at most $n$.

    We have that, whenever $a_i(\pi) < 0$ (i.e.\ the corresponding North step begins strictly below the main diagonal), $n+i$ is attacked exactly once in $\pi'$ by some $j \leq n$. In fact, we have $0 \leq a_{n+i}(\pi') = m + a_i(\pi) < m$, and since $a_j(\pi') = \frac{m}{n}(j-1)$ for $j \leq n$, there exists exactly one $j$ such that $\frac{m}{n}(j-1) \leq a_{n+i}(\pi') < \frac{m}{n}j$ (which is exactly the attack relation, as $j < n+i$).

    For the same reason, whenever $a_i(\pi) < -\frac{m}{n}$ (i.e.\ the corresponding North step ends strictly below the main diagonal), $n+i$ attacks exactly one $j \leq n$ in $\pi'$. In fact, if that is the case, we have $a_{n+i}(\pi') = m + a_i(\pi) \leq \frac{m}{n} (n-1)$, so there exists exactly one $j$ such that $a_{n+i}(\pi') < \frac{m}{n}(j-1) \leq a_{n+i}(\pi') + \frac{m}{n}j$ (which is exactly the attack relation, as $n+i > j$).

    Summarising, we have
    \begin{align*}
        \cdinv(\pi) & = \cdinv(\pi') \\
        & = \pdinv(\pi') - \maxtdinv(\pi') \\
        & = \pdinv(\pi) - \maxtdinv(\pi') \\
        & = \pdinv(\pi) - \maxtdinv(\pi) - \# \{ i \mid a_i(\pi) < 0 \} - \# \left\{ i \mid a_i(\pi) < -\frac{m}{n} \right\}
    \end{align*}
    as desired.
\end{proof}

Note that the term $\# \{ i \mid a_i(\pi) < 0 \}$ counts the number of North steps of the path that begin below the main diagonal, in the same fashion as in the tertiary dinv (or bonus dinv) for square paths \cites{Loehr-Warrington-square-2007, Leven-2016}.
To obtain a unified definition of dinv of rectangular paths that matches the expected symmetric functions, it turns out that we have to keep that term and disregard the term $\# \{ i \mid a_i(\pi) < -\frac{m}{n} \}$. This finally leads us to the following definition.

\begin{definition}
    We define the \emph{dinv} of a labelled rectangular path $(\pi, w)$ as \[ \dinv(\pi, w) \coloneqq \tdinv(\pi, w) + \cdinv(\pi) +  \# \{ i \mid a_i(\pi) < 0 \}. \]
\end{definition}

\begin{figure*}[p]
    \centering
\begin{minipage}{.3\textwidth}
    %!1
    \begin{tikzpicture}[scale=.8]
        
    \draw[step=1.0, gray!60, thin] (0,0) grid (5,7);
    
    \begin{scope}
        \clip (0,0) rectangle (5,7);
        \draw[gray!60, thin](0,0) -- (5,7);
        \draw[gray!60, thin](0,1) -- (5,7+1);
    \end{scope}
    \draw[blue!60, line width=2pt] (0,0) -- (0,1) -- (0,2) -- (1,2) -- (1,3) -- (2,3) -- (3,3) -- (3,4) -- (3,5) -- (4,5) -- (4,6) -- (4,7) -- (5,7);
    \draw[red, line width=2pt] (0,0) -- (0,1);
    \filldraw[red] (4,6) circle(2pt);
    
    \draw (0.5,0.5) circle (0.4cm) node {$2$};
    \draw (0.5,1.5) circle (0.4cm) node {$3$};
    \draw (1.5,2.5) circle (0.4cm) node {$1$};
    \draw (3.5,3.5) circle (0.4cm) node {$2$};
    \draw (3.5,4.5) circle (0.4cm) node {$4$};
    \draw (4.5,5.5) circle (0.4cm) node {$3$};
    \draw (4.5,6.5) circle (0.4cm) node {$4$};
\end{tikzpicture}
\end{minipage}%
\begin{minipage}{.3\textwidth}
    %!2
    \begin{tikzpicture}[scale=.8]
        
        \draw[step=1.0, gray!60, thin] (0,0) grid (5,7);

        \begin{scope}
        \clip (0,0) rectangle (5,7);
        \draw[gray!60, thin](0,2) -- (5,7+2);
        \draw[gray!60, thin](0,1) -- (5,7+1);
    \end{scope}
    
    \draw[blue!60, line width=2pt] (0,0) -- (0,1) -- (0,2) -- (1,2) -- (1,3) -- (2,3) -- (3,3) -- (3,4) -- (3,5) -- (4,5) -- (4,6) -- (4,7) -- (5,7);
    
    \draw[red, line width=2pt] (0,1) -- (0,2);
    
    \draw (0.5,0.5) circle (0.4cm) node {$2$};
    \draw (0.5,1.5) circle (0.4cm) node {$3$};
    \draw (1.5,2.5) circle (0.4cm) node {$1$};
    \draw (3.5,3.5) circle (0.4cm) node {$2$};
    \draw (3.5,4.5) circle (0.4cm) node {$4$};
    \draw (4.5,5.5) circle (0.4cm) node {$3$};
    \draw (4.5,6.5) circle (0.4cm) node {$4$};
\end{tikzpicture}
\end{minipage}%
\begin{minipage}{.3\textwidth}    
    %!3
    \begin{tikzpicture}[scale=.8]
        \draw[step=1.0, gray!60, thin] (0,0) grid (5,7);
        
        \begin{scope}
            \clip (0,0) rectangle (5,7);
            \draw[gray!60, thin](-5+1,-7+2) -- (5+1,7+2);
            \draw[gray!60, thin](-5+1,-7+3) -- (5+1,7+3);
        \end{scope}
        
        \fill[gray,opacity = .5] (0,2) rectangle (1,3);
        \draw[blue!60, line width=2pt] (0,0) -- (0,1) -- (0,2) -- (1,2) -- (1,3) -- (2,3) -- (3,3) -- (3,4) -- (3,5) -- (4,5) -- (4,6) -- (4,7) -- (5,7);

        \draw[red, line width=2pt] (1,2) -- (1,3);
        \filldraw[red] (0,1) circle(2pt);
        
        \draw (0.5,0.5) circle (0.4cm) node {$2$};
        \draw (0.5,1.5) circle (0.4cm) node {$3$};
        \draw (1.5,2.5) circle (0.4cm) node {$1$};
        \draw (3.5,3.5) circle (0.4cm) node {$2$};
        \draw (3.5,4.5) circle (0.4cm) node {$4$};
        \draw (4.5,5.5) circle (0.4cm) node {$3$};
        \draw (4.5,6.5) circle (0.4cm) node {$4$};
    \end{tikzpicture}
\end{minipage}

\bigskip

\begin{minipage}{.3\textwidth}
    %!4
    \begin{tikzpicture}[scale=.8]
        \draw[step=1.0, gray!60, thin] (0,0) grid (5,7);

        \begin{scope}
            \clip (0,0) rectangle (5,7);
            \draw[gray!60, thin](-5+3,-7+3) -- (5+3,7+3);
            \draw[gray!60, thin](-5+3,-7+4) -- (5+3,7+4);
        \end{scope}
        
        \fill[gray,opacity = .5] (2,3) rectangle (3,4);
        
        \draw[blue!60, line width=2pt] (0,0) -- (0,1) -- (0,2) -- (1,2) -- (1,3) -- (2,3) -- (3,3) -- (3,4) -- (3,5) -- (4,5) -- (4,6) -- (4,7) -- (5,7);
        
        \draw[red, line width=2pt] (3,3) -- (3,4);
        \filldraw[red] (4,5) circle(2pt);

        \draw (0.5,0.5) circle (0.4cm) node {$2$};
        \draw (0.5,1.5) circle (0.4cm) node {$3$};
        \draw (1.5,2.5) circle (0.4cm) node {$1$};
        \draw (3.5,3.5) circle (0.4cm) node {$2$};
        \draw (3.5,4.5) circle (0.4cm) node {$4$};
        \draw (4.5,5.5) circle (0.4cm) node {$3$};
        \draw (4.5,6.5) circle (0.4cm) node {$4$};
    \end{tikzpicture}
\end{minipage}%
\begin{minipage}{.3\textwidth}
    %!5
    \begin{tikzpicture}[scale=.8]
        \draw[step=1.0, gray!60, thin] (0,0) grid (5,7);

        \begin{scope}
            \clip (0,0) rectangle (5,7);
            \draw[gray!60, thin](-5+3,-7+5) -- (5+3,7+5);
            \draw[gray!60, thin](-5+3,-7+4) -- (5+3,7+4);
        \end{scope}

        \draw[blue!60, line width=2pt] (0,0) -- (0,1) -- (0,2) -- (1,2) -- (1,3) -- (2,3) -- (3,3) -- (3,4) -- (3,5) -- (4,5) -- (4,6) -- (4,7) -- (5,7);

        \draw[red, line width=2pt] (3,4) -- (3,5);

        \draw (0.5,0.5) circle (0.4cm) node {$2$};
        \draw (0.5,1.5) circle (0.4cm) node {$3$};
        \draw (1.5,2.5) circle (0.4cm) node {$1$};
        \draw (3.5,3.5) circle (0.4cm) node {$2$};
        \draw (3.5,4.5) circle (0.4cm) node {$4$};
        \draw (4.5,5.5) circle (0.4cm) node {$3$};
        \draw (4.5,6.5) circle (0.4cm) node {$4$};
    \end{tikzpicture}
\end{minipage}%
\begin{minipage}{.3\textwidth}
    %!6
    \begin{tikzpicture}[scale=.8]
        \draw[step=1.0, gray!60, thin] (0,0) grid (5,7);

        \begin{scope}
            \clip (0,0) rectangle (5,7);
            \draw[gray!60, thin](-5+4,-7+5) -- (5+4,7+5);
            \draw[gray!60, thin](-5+4,-7+6) -- (5+4,7+6);
        \end{scope}

        \fill[gray,opacity = .5] (3,5) rectangle (4,6);
        
        \draw[blue!60, line width=2pt] (0,0) -- (0,1) -- (0,2) -- (1,2) -- (1,3) -- (2,3) -- (3,3) -- (3,4) -- (3,5) -- (4,5) -- (4,6) -- (4,7) -- (5,7);

        \draw[red, line width=2pt] (4,5) -- (4,6);
        \filldraw[red] (3,4) circle(2pt);

        \draw (0.5,0.5) circle (0.4cm) node {$2$};
        \draw (0.5,1.5) circle (0.4cm) node {$3$};
        \draw (1.5,2.5) circle (0.4cm) node {$1$};
        \draw (3.5,3.5) circle (0.4cm) node {$2$};
        \draw (3.5,4.5) circle (0.4cm) node {$4$};
        \draw (4.5,5.5) circle (0.4cm) node {$3$};
        \draw (4.5,6.5) circle (0.4cm) node {$4$};
    \end{tikzpicture}
\end{minipage}

\bigskip

\begin{minipage}{.3\textwidth}
    %!7
    \begin{tikzpicture}[scale=.8]
        \draw[step=1.0, gray!60, thin] (0,0) grid (5,7);

        \begin{scope}
            \clip (0,0) rectangle (5,7);

            \draw[gray!60, thin](-5+4,-7+7) -- (5+4,7+7);
            \draw[gray!60, thin](-5+4,-7+6) -- (5+4,7+6);
        \end{scope}

        \fill[gray,opacity = .5] (1,6) rectangle (2,7);

        \draw[blue!60, line width=2pt] (0,0) -- (0,1) -- (0,2) -- (1,2) -- (1,3) -- (2,3) -- (3,3) -- (3,4) -- (3,5) -- (4,5) -- (4,6) -- (4,7) -- (5,7);

        \draw[red, line width=2pt] (4,6) -- (4,7);

        \draw (0.5,0.5) circle (0.4cm) node {$2$};
        \draw (0.5,1.5) circle (0.4cm) node {$3$};
        \draw (1.5,2.5) circle (0.4cm) node {$1$};
        \draw (3.5,3.5) circle (0.4cm) node {$2$};
        \draw (3.5,4.5) circle (0.4cm) node {$4$};
        \draw (4.5,5.5) circle (0.4cm) node {$3$};
        \draw (4.5,6.5) circle (0.4cm) node {$4$};
    \end{tikzpicture}
\end{minipage}
\caption{Calculation of the dinv of a rectangular square path.}
\label{fig:dinv}
\end{figure*}
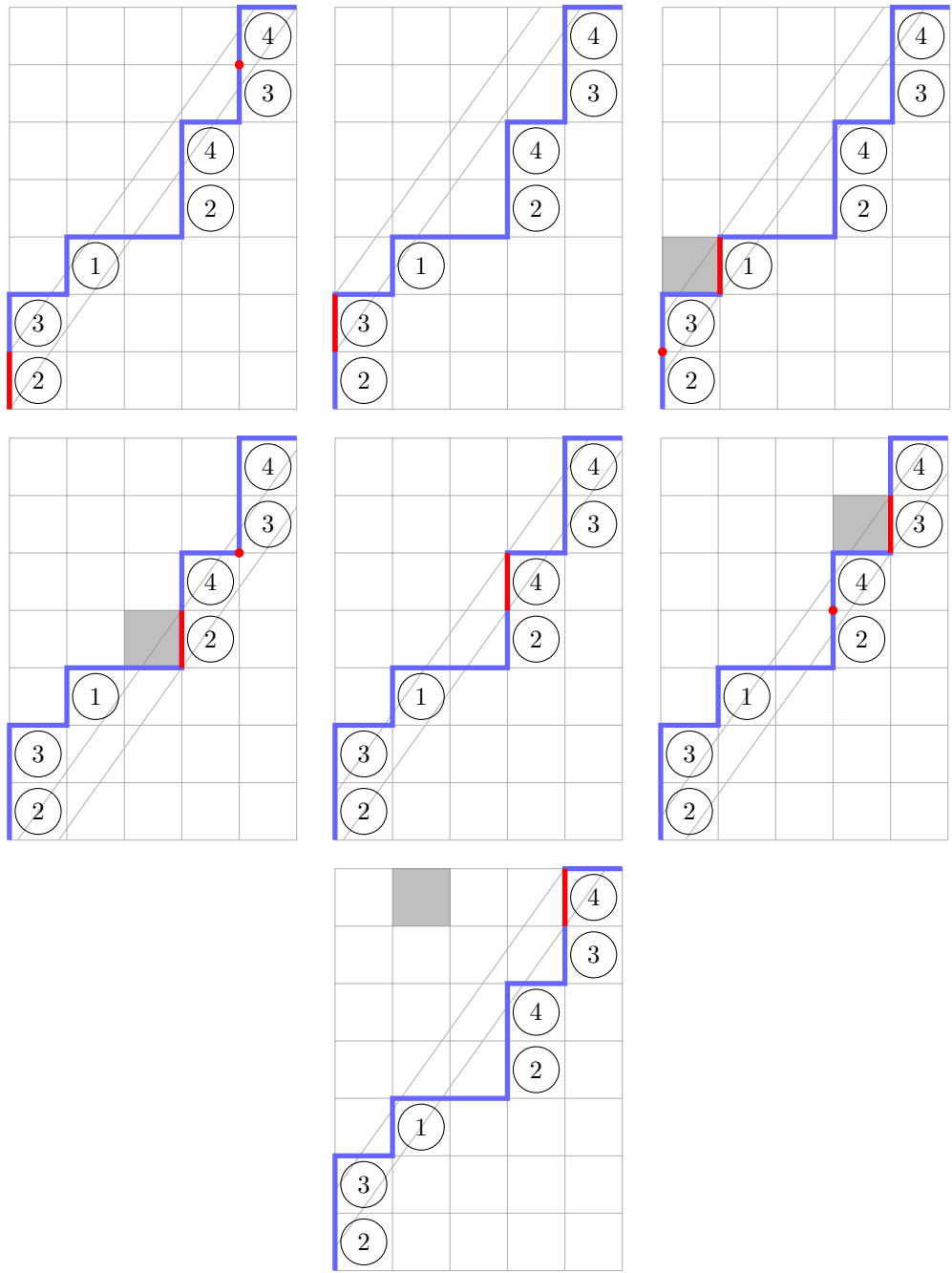

We now give a visual interpretation of the various summands. 

The temporary dinv counts all pairs of North steps $(i,j)$ such that $w_i < w_j$ and the $j$-th North step begins between the line $y = \frac{n}{m}(x + a_i)$ and the line $y =\frac{n}{m}(x + a_i) + 1$, with ties broken by comparing $i$ and $j$. In Figure~\ref{fig:dinv}, we have drawn these two lines for all North steps of the path and marked the beginnings of North steps contained between them and that satisfy the condition on the label. We see that the contribution to the dinv is $4$.

The dinv correction is split into two parts. The first summand counts the number of cells $c$ above the path such that the two lines parallel to the main diagonal and starting from the endpoints of the East step below $c$ both intersect the North step to the right of $c$ (bottom endpoint excluded, but top endpoint included). 
The second summand counts the number of cells $c$ above the path such that the two lines parallel to the main diagonal and starting from the endpoints of the North step to the right of $c$ both intersect the East step below $c$ (right endpoint included, but left endpoint excluded). Notice that the two sets cannot simultaneously be non-empty, the first one being empty if $m \leq n$ and the second one being empty if $m \geq n$. In Figure~\ref{fig:dinv}, we have a path of size $5 \times 7$  so the second term is $0$. We have greyed out the cells counted in the third term, giving a contribution to the dinv of $-4$.

The bonus dinv, as previously mentioned, counts the number of North steps of the path that begin below the main diagonal. In Figure~\ref{fig:dinv} there are $3$ North steps starting below the main diagonal.

Thus the path in Figure~\ref{fig:dinv} has dinv equal to $3$.

\section{Conjectures}

With the previous definitions in mind, we can state the \emph{rectangular shuffle theorem} \cite{mellit2021toric} and several new conjectures, which were verified by computer for all paths with semiperimeter $m+n$ up to $13$.

\begin{theorem}{\cite{mellit2021toric}}
    For any $m, n \in \mathbb{N}$, we have \[ e_{m,n} = \sum_{\pi \in \LRD(m,n)} q^{\dinv(\pi)} t^{\area(\pi)} x^\pi. \]
    \label{thm:rectangular-dyck}
\end{theorem}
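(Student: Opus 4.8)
The plan is to reprove the identity along the lines of the compositional shuffle theorem of Carlsson--Mellit \cite{Carlsson-Mellit-ShuffleConj-2018}, adapted to the rectangular geometry as in \cite{mellit2021toric}: realise each side as the outcome of applying one explicit word in a short list of operators, and then match the two interpretations. A preliminary reduction handles the non-coprime case. Writing $m = ad$, $n = bd$ with $\gcd(a,b) = 1$, the symmetric function $e_{m,n} = F_{a,b}(e_d)$ is by definition a $\mathbb{Q}(q,t)$-linear combination, over $\lambda \vdash d$, of products $\prod_{i=1}^{\ell(\lambda)} Q_{\lambda_i a, \lambda_i b}(1)$, with the $\lambda$-scalars coming from the expansion of $e_d$ in the basis $h_\lambda[\tfrac{1-qt}{qt}X]$. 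On the combinatorial side, one decomposes a path $\pi \in \LRD(m,n)$ according to its contacts with the $(a,b)$-diagonal: between two consecutive contacts $\pi$ restricts to a coprime $(\lambda_i a) \times (\lambda_i b)$ rectangular Dyck path, and $\dinv$, $\area$, and $x^\pi$ are --- up to exactly those scalars --- multiplicative over this decomposition. This reduces the statement to the case $\gcd(m,n) = 1$, where $e_{m,n}$ is, up to the normalisation built into $F_{a,b}$, just $Q_{m,n}(1)$.

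For coprime $(m,n)$ I would realise $Q_{m,n}(1)$ as the $V_0 = \Lambda$ component of a single word $W_{m,n}$ in the generators $d_+$, $d_-$, $T_1, T_2, \dots$ of the Carlsson--Mellit (Dyck path) algebra acting on its polynomial representation $V = \bigoplus_{k \geq 0} V_k$. The word $W_{m,n}$ is dictated by the lattice path $\Gamma_{m,n}$ that runs immediately above the line $my = nx$: North steps become $d_+$'s, East steps become $d_-$'s weighted by the monomial in $q$ and $t$ recording their height, and maximal runs of North steps are interleaved with the Hecke operators $T_i$. The assertion that $W_{m,n}$ applied to $1$ and then projected onto $V_0$ equals $Q_{m,n}(1)$ is an identity in $\mathrm{End}(V)$, which I would establish by induction on $m+n$, using the embedding of the elliptic Hall algebra into $\mathrm{End}(V)$ together with the commutator recursion $Q_{m,n} = \tfrac{1}{M}(Q_{c,d} Q_{a,b} - Q_{a,b} Q_{c,d})$ and the B\'ezout data $(a,b,c,d)$.

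The right-hand side would then be obtained by expanding the operator word combinatorially, inducting on its length: one applies each letter to the current sum of partially built labelled paths while tracking the pair $(\dinv, \area)$ together with the monomial $x^w$. Here $d_+$ adjoins a North step carrying a new largest label; each $T_i$ performs the adjacent transposition of labels responsible for the ``$w_i < w_j$'' conditions defining $\tdinv$; and $d_-$ adjoins an East step while accumulating the appropriate power of $q$ (the portion of $\cdinv = \pdinv - \maxtdinv$ seen so far) and of $t$ (the new area cells). The key structural fact is that ``remove the first step of $\pi$'' is compatible with stripping the leading letter of $W_{m,n}$, so that the two inductions run in parallel and match term by term.

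The main obstacle is the algebraic identity of the second step, namely that $W_{m,n}$ applied to $1$, projected to $V_0$, reproduces $Q_{m,n}(1)$. For $m = n$ this is exactly the Carlsson--Mellit theorem \cite{Carlsson-Mellit-ShuffleConj-2018}; the rectangular case genuinely needs the $\mathrm{SL}_2(\mathbb{Z})$-symmetry of the elliptic Hall algebra and the relations among the operators $Q_{m,n}$ encoded in \cite{Bergeron-Garsia-Sergel-Xin-2016}*{Algorithm~4.1}. This is where Mellit's ``toric braid'' machinery is essential: the word $W_{m,n}$ is interpreted as the image, under a suitable trace, of a braid drawn on the torus; the commutator recursion becomes a skein-type relation; and it is this relation that allows the induction on $m+n$ to close.
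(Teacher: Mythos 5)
First, note that the paper does not prove this statement at all: it is quoted verbatim from \cite{mellit2021toric} and used as an input (the paper's own contributions are \Cref{thm:sweep} and \Cref{thm:coprime-case}). So the only question is whether your sketch would stand on its own, and as written it has two genuine gaps. The first is the reduction of the non-coprime case to the coprime case. You claim that cutting $\pi \in \LRD(m,n)$ at its contacts with the $(a,b)$-diagonal makes $\dinv$, $\area$ and $x^\pi$ multiplicative ``up to exactly those scalars'' $c_\lambda(q,t)$. This fails: attack relations (already primary dinv pairs in the square case) routinely join rows lying in different blocks of such a decomposition, so $\dinv$ is not additive over diagonal touches; moreover $\prod_i Q_{\lambda_i a,\lambda_i b}(1)$ is a product of symmetric functions, which combinatorially encodes shuffles of label sets rather than concatenation of subpaths, and the coefficients $c_\lambda(q,t)$ are plethystic expansion coefficients of $e_d$ in the basis $h_\lambda\left[\frac{1-qt}{qt}X\right]$, not correction factors for any block decomposition. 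A statement of this kind is essentially a compositional-type refinement whose difficulty is comparable to the theorem itself; Mellit's proof does not make this reduction but treats general $(m,n)$ uniformly, with the linear combination defining $F_{a,b}(e_d)$ entering through the elliptic Hall algebra side, not through a cut-and-paste of paths.

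The second gap is that the heart of the argument --- that your word $W_{m,n}$ in $d_+$, $d_-$, $T_i$, applied to $1$ and projected to $V_0$, equals $Q_{m,n}(1)$ (equivalently, that $\sum_{\pi}\sweep(\pi)=e_{m,n}$) --- is exactly the main theorem of \cite{mellit2021toric}, and you explicitly defer it to ``Mellit's toric braid machinery''. An induction on $m+n$ via the commutator recursion $Q_{m,n}=\frac{1}{M}(Q_{c,d}Q_{a,b}-Q_{a,b}Q_{c,d})$ does not close on its own: the recursion produces a commutator of operators, not a concatenation of sweep words, and bridging that mismatch (via isotopy invariance of the characteristic function of toric braids) is precisely the new content of Mellit's proof. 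The remaining step you describe, expanding the word letter by letter to recover $q^{\dinv}t^{\area}x^\pi$, is essentially \cite{mellit2021toric}*{Theorem~4.2} (recalled and extended in this paper's \Cref{thm:sweep}) and is fine in spirit, but with the two gaps above the proposal is an outline of the cited proof with its central step left open rather than a proof.
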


Conjecturally, we extend this result to rectangular paths, as follows.

\begin{conjecture}
    \label{conjecture:rectangular-paths}
    For any $m, n \in \mathbb{N}$, and $d = \gcd(m,n)$, we have \[ \frac{[m]_q}{[d]_q} p_{m,n} = \sum_{\pi \in \LRP(m,n)} q^{\dinv(\pi)} t^{\area(\pi)} x^\pi. \]
\end{conjecture}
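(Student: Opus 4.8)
The plan is to deduce \Cref{conjecture:rectangular-paths} by induction on the semiperimeter $m+n$, using as inputs the rectangular shuffle theorem (\Cref{thm:rectangular-dyck}) and the coprime case of the conjecture (the main theorem of this paper). Write $d=\gcd(m,n)$ and $m=ad$, $n=bd$ with $\gcd(a,b)=1$. Note that $\tfrac{[m]_q}{[d]_q}=\tfrac{1-q^m}{1-q^d}=[m/d]_{q^d}$, a $q^d$-graded count of $a$ objects; this already hints that the left-hand side should arise from a cyclic count of ``primitive frames'' inside each residue class modulo $d$. The core of the argument is to match, term by term, a symmetric-function expansion of $\tfrac{[m]_q}{[d]_q}p_{m,n}$ with a stratification of $\LRP(m,n)$ by the shift $s$ (equivalently, by the base diagonal) together with the multiset of lattice points at which the path meets its base diagonal: the stratum $s=0$ is exactly $\LRD(m,n)$ and contributes $e_{m,n}$ by \Cref{thm:rectangular-dyck}, while the strata with $s>0$ should be matched with ``primitive'' pieces on coprime rectangles (handled by the main theorem) glued to residual paths on strictly smaller rectangles (handled by the inductive hypothesis).

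\textbf{The symmetric-function side.} The first step is to make $\tfrac{[m]_q}{[d]_q}p_{m,n}$ explicit. Starting from $p_{m,n}=F_{a,b}(p_d)$ and the classical expansion of the power sum $p_d$ in the basis $\{\,\left(\tfrac{qt}{qt-1}\right)^{\ell(\lambda)} h_\lambda\!\left[\tfrac{1-qt}{qt}X\right]\,\}_{\lambda\vdash d}$, one obtains $p_{m,n}$ as a signed $\Z$-linear combination of the products $\prod_i Q_{\lambda_i a,\ \lambda_i b}(1)$, the sign depending on $\ell(\lambda)$. Using the defining commutator recursion of the $Q_{m',n'}$, the base cases $Q_{1,0}=D_0=\mathsf{id}-M\Delta_{e_1}$ and $Q_{0,1}=-\underline{e_1}$, and the identities tying $Q_{m',n'}(1)$ to the $e_{m',n'}$ (via $F_{a,b}$) and to the operators $\nabla$, $\Delta_f$, $\mathbf{\Pi}$, one should be able to rewrite $\tfrac{[m]_q}{[d]_q}p_{m,n}$ as $e_{m,n}$ plus an explicit alternating sum, over ``partial shifts'', of terms built from the $e_{m',n'}$ with $m'+n'<m+n$ and from the coprime symmetric functions $p_{m'',n''}$ already handled; the prefactor $[m/d]_{q^d}$ should emerge from collecting the $a$ cyclic rotations of a primitive frame with their $q^d$-weights. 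This reduces the right-hand side of the conjecture (after applying \Cref{thm:rectangular-dyck} and the coprime case) to a purely Dyck-path, lower-semiperimeter identity.

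\textbf{The combinatorial side.} In parallel, one realises the same stratification directly on paths. For $\pi\in\LRP(m,n)$ with shift $s>0$, ``unroll'' $\pi$ by cyclically rotating its area word — equivalently, by transferring the forced terminal East step and an initial segment across the lattice points where $\pi$ touches its base diagonal. Iterating, one either returns to a Dyck path on the same rectangle or factors $\pi$ into a primitive piece on a coprime rectangle together with a residual rectangular path on a strictly smaller rectangle, feeding the induction. One must then verify that this family of moves transforms $\dinv(\pi,w)=\tdinv(\pi,w)+\cdinv(\pi)+\#\{i\mid a_i(\pi)<0\}$ exactly as prescribed by the symmetric-function recursion: the bonus term $\#\{i\mid a_i(\pi)<0\}$ changes by the controlled amount each time the base diagonal is crossed, $\cdinv(\pi)$ is tracked through \Cref{thm:cdinv} (and its Dyck-path predecessor), $\tdinv$ changes only through a predictable set of newly created or destroyed attack relations, $\area$ is read off from the shift of the base diagonal, and $x^\pi$ is preserved.

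\textbf{Main obstacle.} The hard part is the last step made uniform across all $d$. Unlike the coprime case, a path on a rectangle with $d=\gcd(m,n)>1$ has no canonical ``primitive factor'', so the unrolling/factoring map must be chosen with real care for the $\dinv$ bookkeeping to close up; and there is at present no compositional (return-point) refinement of the rectangular shuffle theorem for general rectangular \emph{paths}, only for Dyck paths, so one cannot simply transplant the mechanism that made the square paths theorem and the rise Delta conjecture tractable. I expect a complete proof will require either upgrading the $Q_{m,n}$/elliptic-Hall-algebra side to a refined rectangular-paths identity (extending Mellit's toric argument) and matching it with a compositional statistic on $\LRP(m,n)$, or constructing a direct sign-reversing involution on $\LRP(m,n)\setminus\LRD(m,n)$ that collapses the non-Dyck contributions onto the discrepancy $\tfrac{[m]_q}{[d]_q}p_{m,n}-e_{m,n}$; in either route the main new ingredient is the control of the bonus and correction terms of $\dinv$ under the $d$-periodic decomposition, which is why only the coprime case is established here.
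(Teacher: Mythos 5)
The statement you are addressing is a \emph{conjecture} in the paper: the authors do not prove it for general $m,n$, but only support it by computer verification up to semiperimeter $13$ and prove the special case $\gcd(m,n)=1$ (\Cref{thm:coprime-case}), via the sweep process (\Cref{thm:sweep}) and a partition of $\RP(m,n)$ into orbits of size $m$ under the maps $\phi_k$ that cut a path at its $k$-th horizontal step and swap the two pieces, with \Cref{lemma:main_char} controlling the resulting power of $q$. Your proposal is likewise not a proof, and you say so yourself; so the honest assessment is that you have produced a research program, not an argument, and it should not be graded as establishing the conjecture. Your coprime mechanism (unrolling a path across the points where it meets its base diagonal, i.e.\ cyclically rotating the area word) is essentially the same device as the paper's $\phi_k$, so had you executed it for $d=1$ you would have recovered the paper's route to \Cref{thm:coprime-case}.

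The genuine gaps in the part that goes beyond the paper are these. First, the symmetric-function step is asserted, not performed: there is no known identity rewriting $\frac{[m]_q}{[d]_q}\,p_{m,n}$ as $e_{m,n}$ plus an alternating sum of terms built from lower-semiperimeter $e_{m',n'}$ and coprime $p_{m'',n''}$, and the commutator recursion for $Q_{m,n}$ does not obviously produce one; the factor $[m/d]_{q^d}$ does not simply ``emerge from collecting cyclic rotations'' because for $d>1$ the orbit structure of any cutting map on $\RP(m,n)$ is no longer free of size $m$, which is exactly where the paper's coprime argument breaks. Second, the combinatorial factorization of a path with $s>0$ into a ``primitive piece on a coprime rectangle'' glued to a smaller rectangular path is not defined and is not true in any evident sense: a non-Dyck path on an $m\times n$ rectangle with $d>1$ need not decompose along sub-rectangles of coprime shape, and the bookkeeping of $\tdinv$, $\cdinv$ and the bonus term under such a (hypothetical) move is precisely the content that would have to be invented. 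Third, even the base pattern of your induction is shakier than you suggest: in the square case $m=n$ the analogous statement (the square paths theorem) was deduced by Sergel from the \emph{compositional} shuffle theorem, and no compositional refinement is available for rectangular paths, as you note; so the inductive scheme cannot close without a substantially new structural input. In short, your outline correctly identifies why the problem is hard, but everything past invoking \Cref{thm:rectangular-dyck} and \Cref{thm:coprime-case} is conjectural, which matches the status the paper itself assigns to \Cref{conjecture:rectangular-paths}.
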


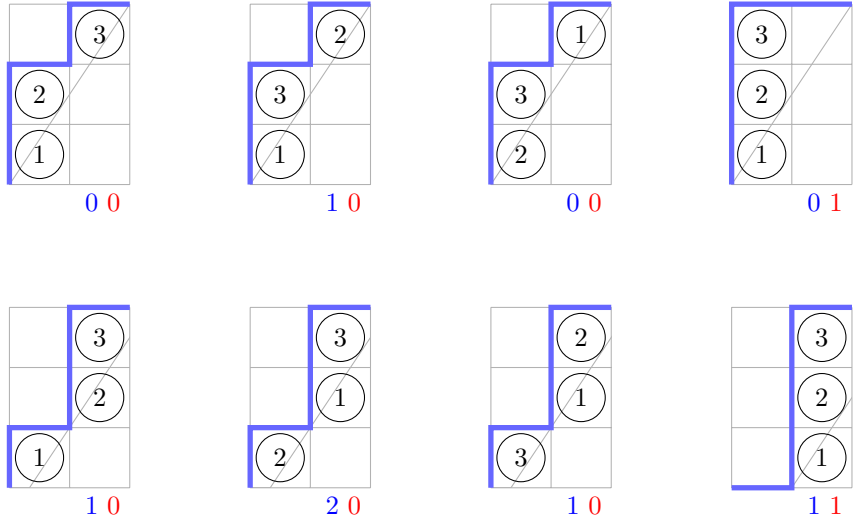
\begin{figure}
\centering
\begin{tikzpicture}[scale=0.8]
    \draw[draw=none, use as bounding box] (-1,-1) rectangle (3,4);
    \draw[step=1.0, gray!60, thin] (0,0) grid (2,3);

    \begin{scope}
        \clip (0,0) rectangle (2,3);
        \draw[gray!60, thin] (0,0) -- (0/1, 0) -- (2/3, 1) -- (4/3, 2) -- (2/1, 3);
    \end{scope}

    \draw[blue!60, line width=2pt] (0,0) -- (0,1) -- (0,2) -- (1,2) -- (1,3) -- (2,3);

    \draw (0.5,0.5) circle (0.4cm) node {$1$};
    \draw (0.5,1.5) circle (0.4cm) node {$2$};
    \draw (1.5,2.5) circle (0.4cm) node {$3$};

      \node[below left] at (2,0) { \color{blue}{$0$} \color{red}{$0$}};
\end{tikzpicture}%
\begin{tikzpicture}[scale=0.8]
    \draw[draw=none, use as bounding box] (-1,-1) rectangle (3,4);
    \draw[step=1.0, gray!60, thin] (0,0) grid (2,3);

    \begin{scope}
        \clip (0,0) rectangle (2,3);
        \draw[gray!60, thin] (0,0) -- (0/1, 0) -- (2/3, 1) -- (4/3, 2) -- (2/1, 3);
    \end{scope}

    \draw[blue!60, line width=2pt] (0,0) -- (0,1) -- (0,2) -- (1,2) -- (1,3) -- (2,3);

    \draw (0.5,0.5) circle (0.4cm) node {$1$};
    \draw (0.5,1.5) circle (0.4cm) node {$3$};
    \draw (1.5,2.5) circle (0.4cm) node {$2$};

      \node[below left] at (2,0) { \color{blue}{$1$} \color{red}{$0$}};
\end{tikzpicture}%
\begin{tikzpicture}[scale=0.8]
    \draw[draw=none, use as bounding box] (-1,-1) rectangle (3,4);
    \draw[step=1.0, gray!60, thin] (0,0) grid (2,3);

    \begin{scope}
        \clip (0,0) rectangle (2,3);
        \draw[gray!60, thin] (0,0) -- (0/1, 0) -- (2/3, 1) -- (4/3, 2) -- (2/1, 3);
    \end{scope}

    \draw[blue!60, line width=2pt] (0,0) -- (0,1) -- (0,2) -- (1,2) -- (1,3) -- (2,3);

    \draw (0.5,0.5) circle (0.4cm) node {$2$};
    \draw (0.5,1.5) circle (0.4cm) node {$3$};
    \draw (1.5,2.5) circle (0.4cm) node {$1$};

      \node[below left] at (2,0) { \color{blue}{$0$} \color{red}{$0$}};
\end{tikzpicture}%
\begin{tikzpicture}[scale=0.8]
    \draw[draw=none, use as bounding box] (-1,-1) rectangle (3,4);
    \draw[step=1.0, gray!60, thin] (0,0) grid (2,3);

    \begin{scope}
        \clip (0,0) rectangle (2,3);
        \draw[gray!60, thin] (0,0) -- (0/1, 0) -- (2/3, 1) -- (4/3, 2) -- (2/1, 3);
    \end{scope}

    \draw[blue!60, line width=2pt] (0,0) -- (0,1) -- (0,2) -- (0,3) -- (1,3) -- (2,3);

    \draw (0.5,0.5) circle (0.4cm) node {$1$};
    \draw (0.5,1.5) circle (0.4cm) node {$2$};
    \draw (0.5,2.5) circle (0.4cm) node {$3$};

      \node[below left] at (2,0) { \color{blue}{$0$} \color{red}{$1$}};
\end{tikzpicture}

\begin{tikzpicture}[scale=0.8]
    \draw[draw=none, use as bounding box] (-1,-1) rectangle (3,4);
    \draw[step=1.0, gray!60, thin] (0,0) grid (2,3);

    \begin{scope}
        \clip (0,0) rectangle (2,3);
        \draw[gray!60, thin] (0,0) -- (1/3, 0) -- (1/1, 1) -- (5/3, 2) -- (7/3, 3);
    \end{scope}

    \draw[blue!60, line width=2pt] (0,0) -- (0,1) -- (1,1) -- (1,2) -- (1,3) -- (2,3);

    \draw (0.5,0.5) circle (0.4cm) node {$1$};
    \draw (1.5,1.5) circle (0.4cm) node {$2$};
    \draw (1.5,2.5) circle (0.4cm) node {$3$};

      \node[below left] at (2,0) { \color{blue}{$1$} \color{red}{$0$}};
\end{tikzpicture}%
\begin{tikzpicture}[scale=0.8]
    \draw[draw=none, use as bounding box] (-1,-1) rectangle (3,4);
    \draw[step=1.0, gray!60, thin] (0,0) grid (2,3);

    \begin{scope}
        \clip (0,0) rectangle (2,3);
        \draw[gray!60, thin] (0,0) -- (1/3, 0) -- (1/1, 1) -- (5/3, 2) -- (7/3, 3);
    \end{scope}

    \draw[blue!60, line width=2pt] (0,0) -- (0,1) -- (1,1) -- (1,2) -- (1,3) -- (2,3);

    \draw (0.5,0.5) circle (0.4cm) node {$2$};
    \draw (1.5,1.5) circle (0.4cm) node {$1$};
    \draw (1.5,2.5) circle (0.4cm) node {$3$};

      \node[below left] at (2,0) { \color{blue}{$2$} \color{red}{$0$}};
\end{tikzpicture}%
\begin{tikzpicture}[scale=0.8]
    \draw[draw=none, use as bounding box] (-1,-1) rectangle (3,4);
    \draw[step=1.0, gray!60, thin] (0,0) grid (2,3);

    \begin{scope}
        \clip (0,0) rectangle (2,3);
        \draw[gray!60, thin] (0,0) -- (1/3, 0) -- (1/1, 1) -- (5/3, 2) -- (7/3, 3);
    \end{scope}

    \draw[blue!60, line width=2pt] (0,0) -- (0,1) -- (1,1) -- (1,2) -- (1,3) -- (2,3);

    \draw (0.5,0.5) circle (0.4cm) node {$3$};
    \draw (1.5,1.5) circle (0.4cm) node {$1$};
    \draw (1.5,2.5) circle (0.4cm) node {$2$};

      \node[below left] at (2,0) { \color{blue}{$1$} \color{red}{$0$}};
\end{tikzpicture}%
\begin{tikzpicture}[scale=0.8]
    \draw[draw=none, use as bounding box] (-1,-1) rectangle (3,4);
    \draw[step=1.0, gray!60, thin] (0,0) grid (2,3);

    \begin{scope}
        \clip (0,0) rectangle (2,3);
        \draw[gray!60, thin] (0,0) -- (1/1, 0) -- (5/3, 1) -- (7/3, 2) -- (3/1, 3);
    \end{scope}

    \draw[blue!60, line width=2pt] (0,0) -- (1,0) -- (1,1) -- (1,2) -- (1,3) -- (2,3);

    \draw (1.5,0.5) circle (0.4cm) node {$1$};
    \draw (1.5,1.5) circle (0.4cm) node {$2$};
    \draw (1.5,2.5) circle (0.4cm) node {$3$};

      \node[below left] at (2,0) { \color{blue}{$1$} \color{red}{$1$}};
\end{tikzpicture}
\caption{The set of $2 \times 3$ standard rectangular paths, with their dinv (in blue) and area (in red).}
\label{fig:rectangular-paths-23}
\end{figure}

\begin{example}
    Let $m=2$ and $n=3$. In \Cref{conjecture:rectangular-paths}, we can check for example that the Hilbert series (that is, the scalar product with $h_{1^n}$) coincides with the sum over all $2 \times 3$ standard rectangular paths of the monomial $q^{\dinv(\pi)} t^{\area(\pi)}$. In fact, we have
    \[ \frac{[2]_q}{[1]_q} \langle p_{2,3}, h_{1^3} \rangle = (1+q)(q+t+2) = 1 + q + 1 + t + q + q^2 + q + qt, \]
    which coincides with the values in Figure~\ref{fig:rectangular-paths-23}.
\end{example}

We also have (univariate) analogues of the Delta conjecture and the Delta square conjecture for rectangular (Dyck) paths, using Theta operators.

\begin{conjecture}
    \label{conj:rectangular-dyck-delta}
    For any $m, n \in \mathbb{N}$, we have \[ \left. \Theta_{e_k} e_{m,n} \right\rvert_{q=1} = \sum_{\pi \in \LRD(m+k,n+k)^{\ast k}} t^{\area(\pi)} x^\pi. \]
\end{conjecture}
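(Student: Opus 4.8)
The plan is to prove \Cref{conj:rectangular-dyck-delta} by induction on the number $k$ of decorated rises, the case $k=0$ being \Cref{thm:rectangular-dyck} specialised at $q=1$, i.e.\ $\left.e_{m,n}\right|_{q=1}=\sum_{\pi\in\LRD(m,n)}t^{\area(\pi)}x^\pi$. The first step is bookkeeping: since a labelling is constrained only within each maximal vertical run, summing $x^w$ over the labellings of a fixed unlabelled decorated path $(\pi,dr)$ produces $e_{\lambda(\pi)}[X]$, where $\lambda(\pi)$ is the partition of vertical-run lengths. Hence the right-hand side of \Cref{conj:rectangular-dyck-delta} equals $\sum_{(\pi,dr)\in\RD(m+k,n+k)^{\ast k}}t^{\area(\pi,dr)}\,e_{\lambda(\pi)}$, and the goal becomes an identity in $\Lambda$ with $t$ a parameter. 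One then needs two matching recursions: a combinatorial one lowering $k$ on the right, and an algebraic one describing the extra Theta factor on the left.

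For the combinatorial recursion I would decompose $\RD(m+k,n+k)^{\ast k}$ by conditioning on the lowest decorated rise: deleting the decorated North step of smallest row index together with a canonically determined East step (the choice governed by where $\pi$ meets the broken diagonal), sliding the part of $\pi$ above it down by one unit, and recomputing the broken diagonal should express a path in $\RD(m+k,n+k)^{\ast k}$ as a path in $\RD(m+k-1,n+k-1)^{\ast(k-1)}$ plus a bounded amount of gluing data. One must check compatibility with the statistics: that $\area$ changes predictably --- here excluding decorated rises from $\area$ is exactly what makes the bookkeeping work --- and that the induced transformation of the vertical-run partition $\lambda(\pi)$, summed over the fibres, matches the corresponding operation on the symmetric-function side. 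For $m=n$ the broken diagonal is just the main diagonal and this reduces to the surgery behind the (now proved) $q=1$ case of the rise Delta conjecture, which \Cref{conj:rectangular-dyck-delta} recovers; the new difficulty is to run it against a broken diagonal whose slope jumps across decorated rows.

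On the algebraic side the basic lever is that $\Theta_f\Theta_g=\Theta_{fg}$ (away from the degenerate low-degree cases of \Cref{def:theta}), so Newton's identities give $k\,\Theta_{e_k}=\sum_{i=1}^k(-1)^{i-1}\Theta_{p_i}\Theta_{e_{k-i}}$, reducing the inductive step to understanding how the primitive operators $\Theta_{p_i}$ act on rectangular symmetric functions. The obstruction is that $\mathbf{\Pi}$, and hence $\Theta$, degenerates at $q=1$ --- indeed $\Pi_\mu(1,t)=0$ for every $\mu$ with $\lvert\mu\rvert\geq2$ --- so \Cref{def:theta} cannot be evaluated naively and the finite value of $\left.\Theta_{e_k}e_{m,n}\right|_{q=1}$ is a limit of indeterminate expressions as $q\to1$. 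I would therefore work at generic $q$, establish the recursion for $\Theta_{p_i}e_{m,n}$ from $e_{m,n}=F_{a,b}(e_d)$ together with $Q_{m,n}=\frac1M(Q_{c,d}Q_{a,b}-Q_{a,b}Q_{c,d})$, and specialise only at the end; alternatively, and more robustly, work inside the Carlsson--Mellit Dyck-path algebra \cite{Carlsson-Mellit-ShuffleConj-2018} and transport D'Adderio--Mellit's Theta/compositional proof of the Delta conjecture \cite{DAdderioMellit2022CompositionalDelta} along Mellit's toric-braid proof of \Cref{thm:rectangular-dyck} \cite{mellit2021toric}. Either route should yield the full bivariate identity $\Theta_{e_k}e_{m,n}=\sum_{\pi\in\LRD(m+k,n+k)^{\ast k}}q^{\dinv(\pi)}t^{\area(\pi)}x^\pi$ for the correct (yet to be defined) decorated $\dinv$, of which \Cref{conj:rectangular-dyck-delta} is the $q=1$ slice.

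The main obstacle I anticipate is the structural mismatch between the two sides. The Theta operators do not compose simply ($\Theta_{e_k}\neq\Theta_{e_1}^{k}/k!$), so ``adding one decorated rise'' is not realised by a single clean operator identity; reconciling the $p_i$-shaped algebraic recursion with the $k\mapsto k-1$ combinatorial surgery forces one to find a combinatorial incarnation of each $\Theta_{p_i}$ on decorated rectangular paths --- plausibly ``prepend a vertical run of length $i$ beginning below the broken diagonal'', mirroring the bonus-dinv phenomenon for square paths \cite{Loehr-Warrington-square-2007} --- which is itself a non-trivial sub-problem, and one must then show that the broken diagonal, an object with no straight-diagonal predecessor, is exactly the combinatorial shadow of whatever algebraic recursion is produced. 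The $q=1$ degeneration of $\mathbf{\Pi}$ compounds this, since it bars any direct evaluation and is strong evidence that the cleanest route is in fact to isolate the missing $\dinv$ statistic first and prove the full $q,t$ refinement, from which \Cref{conj:rectangular-dyck-delta} would follow immediately.
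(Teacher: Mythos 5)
There is no proof in the paper for you to match: the statement is \Cref{conj:rectangular-dyck-delta}, which the authors state as a \emph{conjecture}, support only by computer verification up to semiperimeter $13$, and accompany with an open problem asking for the missing $q$-statistic (noting explicitly that the naive decorated dinv fails). So the only question is whether your argument would close the conjecture, and it does not: it is a plan whose two load-bearing components are both left unproven. On the combinatorial side, the surgery ``delete the lowest decorated rise together with a canonically determined East step'' is never defined, and that is where the actual difficulty lives: the broken diagonal depends globally on the decoration set, so removing one decoration changes the diagonal in every row above it (each such row's reference point shifts by $\frac{m}{n}-1$), which alters the area contributions of undecorated rows and may destroy or create violations of the Dyck condition; this is not ``a bounded amount of gluing data,'' and no compatibility of $\area$ or of the vertical-run partition $\lambda(\pi)$ with the fibres is established. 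On the algebraic side, your use of $\Theta_f\Theta_g=\Theta_{fg}$ and Newton's identities is fine, but the resulting reduction needs a formula for $\Theta_{p_i}e_{m,n}$ (or a compositional refinement of \Cref{thm:rectangular-dyck} compatible with Theta operators), and nothing of the sort is available; ``transport the D'Adderio--Mellit argument along Mellit's toric proof'' is a research programme, not a step. Your preferred fallback --- first finding the decorated $\dinv$ and proving the full $(q,t)$ identity --- is strictly stronger than the conjecture and is precisely what the paper records as open.

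Two smaller corrections. First, the claim that $\Pi_\mu(1,t)=0$ for every $\mu$ with $\lvert\mu\rvert\geq 2$ is false: for column shapes one has $\Pi_{(1^k)}=\prod_{j=1}^{k-1}(1-t^j)$, which does not vanish at $q=1$; the vanishing occurs exactly when $\mu$ has at least two columns. Second, no limiting procedure is needed to make sense of the left-hand side: $\Theta_{e_k}e_{m,n}$ is a fixed symmetric function with coefficients in $\mathbb{Q}(q,t)$, and the conjecture simply evaluates it at $q=1$ (its regularity there being part of what is implicitly asserted); the degeneration of $\mathbf{\Pi}$ at $q=1$ is an obstacle only if one tries to define a ``$q=1$ Theta operator'' directly, which neither the paper nor your argument requires.
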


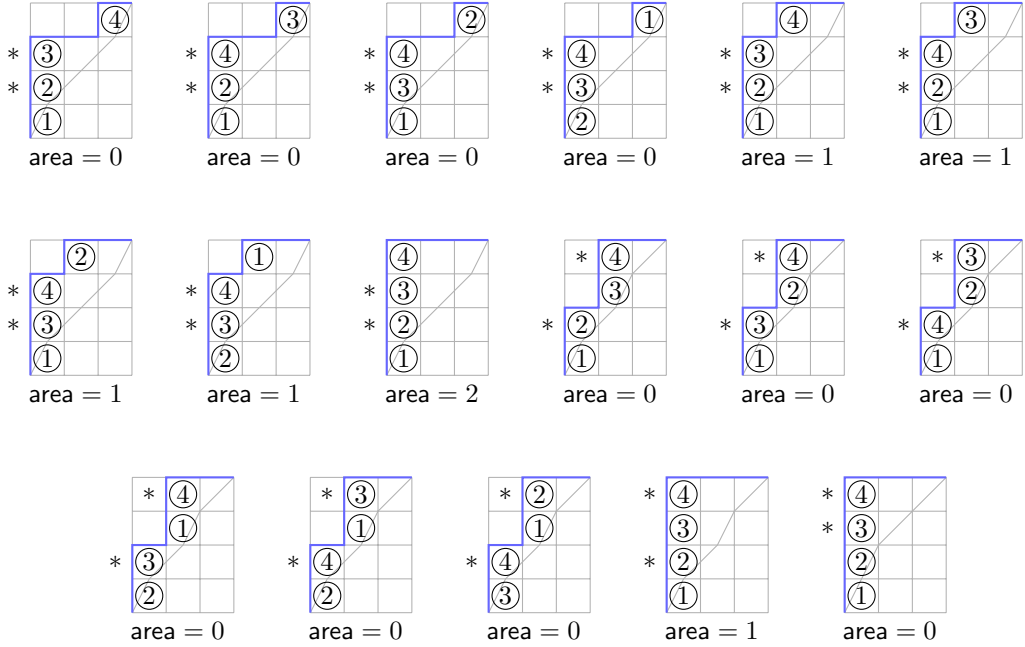
\begin{figure}
\begin{tikzpicture}[scale=0.45]
    \draw[draw=none, use as bounding box] (-1,-1) rectangle (4,5);
    \draw[step=1.0, gray!60, thin] (0,0) grid (3,4);

    \begin{scope}
        \clip (0,0) rectangle (3,4);
        \draw[gray!60, thin] (0,0) -- (0/1, 0) -- (1/2, 1) -- (3/2, 2) -- (5/2, 3) -- (3/1, 4);
    \end{scope}

    \draw[blue!60,thick] (0,0) -- (0,1) -- (0,2) -- (0,3) -- (1,3) -- (2,3) -- (2,4) -- (3,4);

    \draw (0.5,0.5) circle (0.4cm) node {$1$};
    \draw (0.5,1.5) circle (0.4cm) node {$2$};
    \draw (0.5,2.5) circle (0.4cm) node {$3$};
    \draw (2.5,3.5) circle (0.4cm) node {$4$};
    \draw (-0.5,1.5) node {$\ast$};
    \draw (-0.5,2.5) node {$\ast$};

      \node[below left] at (3,0) {{$\area = 0$}};
\end{tikzpicture}
\begin{tikzpicture}[scale=0.45]
    \draw[draw=none, use as bounding box] (-1,-1) rectangle (4,5);
    \draw[step=1.0, gray!60, thin] (0,0) grid (3,4);

    \begin{scope}
        \clip (0,0) rectangle (3,4);
        \draw[gray!60, thin] (0,0) -- (0/1, 0) -- (1/2, 1) -- (3/2, 2) -- (5/2, 3) -- (3/1, 4);
    \end{scope}

    \draw[blue!60,thick] (0,0) -- (0,1) -- (0,2) -- (0,3) -- (1,3) -- (2,3) -- (2,4) -- (3,4);

    \draw (0.5,0.5) circle (0.4cm) node {$1$};
    \draw (0.5,1.5) circle (0.4cm) node {$2$};
    \draw (0.5,2.5) circle (0.4cm) node {$4$};
    \draw (2.5,3.5) circle (0.4cm) node {$3$};
    \draw (-0.5,1.5) node {$\ast$};
    \draw (-0.5,2.5) node {$\ast$};

      \node[below left] at (3,0) {{$\area = 0$}};
\end{tikzpicture}
\begin{tikzpicture}[scale=0.45]
    \draw[draw=none, use as bounding box] (-1,-1) rectangle (4,5);
    \draw[step=1.0, gray!60, thin] (0,0) grid (3,4);

    \begin{scope}
        \clip (0,0) rectangle (3,4);
        \draw[gray!60, thin] (0,0) -- (0/1, 0) -- (1/2, 1) -- (3/2, 2) -- (5/2, 3) -- (3/1, 4);
    \end{scope}

    \draw[blue!60,thick] (0,0) -- (0,1) -- (0,2) -- (0,3) -- (1,3) -- (2,3) -- (2,4) -- (3,4);

    \draw (0.5,0.5) circle (0.4cm) node {$1$};
    \draw (0.5,1.5) circle (0.4cm) node {$3$};
    \draw (0.5,2.5) circle (0.4cm) node {$4$};
    \draw (2.5,3.5) circle (0.4cm) node {$2$};
    \draw (-0.5,1.5) node {$\ast$};
    \draw (-0.5,2.5) node {$\ast$};

      \node[below left] at (3,0) {{$\area = 0$}};
\end{tikzpicture}
\begin{tikzpicture}[scale=0.45]
    \draw[draw=none, use as bounding box] (-1,-1) rectangle (4,5);
    \draw[step=1.0, gray!60, thin] (0,0) grid (3,4);

    \begin{scope}
        \clip (0,0) rectangle (3,4);
        \draw[gray!60, thin] (0,0) -- (0/1, 0) -- (1/2, 1) -- (3/2, 2) -- (5/2, 3) -- (3/1, 4);
    \end{scope}

    \draw[blue!60,thick] (0,0) -- (0,1) -- (0,2) -- (0,3) -- (1,3) -- (2,3) -- (2,4) -- (3,4);

    \draw (0.5,0.5) circle (0.4cm) node {$2$};
    \draw (0.5,1.5) circle (0.4cm) node {$3$};
    \draw (0.5,2.5) circle (0.4cm) node {$4$};
    \draw (2.5,3.5) circle (0.4cm) node {$1$};
    \draw (-0.5,1.5) node {$\ast$};
    \draw (-0.5,2.5) node {$\ast$};

      \node[below left] at (3,0) {{$\area = 0$}};
\end{tikzpicture}
\begin{tikzpicture}[scale=0.45]
    \draw[draw=none, use as bounding box] (-1,-1) rectangle (4,5);
    \draw[step=1.0, gray!60, thin] (0,0) grid (3,4);

    \begin{scope}
        \clip (0,0) rectangle (3,4);
        \draw[gray!60, thin] (0,0) -- (0/1, 0) -- (1/2, 1) -- (3/2, 2) -- (5/2, 3) -- (3/1, 4);
    \end{scope}

    \draw[blue!60,thick] (0,0) -- (0,1) -- (0,2) -- (0,3) -- (1,3) -- (1,4) -- (2,4) -- (3,4);

    \draw (0.5,0.5) circle (0.4cm) node {$1$};
    \draw (0.5,1.5) circle (0.4cm) node {$2$};
    \draw (0.5,2.5) circle (0.4cm) node {$3$};
    \draw (1.5,3.5) circle (0.4cm) node {$4$};
    \draw (-0.5,1.5) node {$\ast$};
    \draw (-0.5,2.5) node {$\ast$};

      \node[below left] at (3,0) {{$\area = 1$}};
\end{tikzpicture}
\begin{tikzpicture}[scale=0.45]
    \draw[draw=none, use as bounding box] (-1,-1) rectangle (4,5);
    \draw[step=1.0, gray!60, thin] (0,0) grid (3,4);

    \begin{scope}
        \clip (0,0) rectangle (3,4);
        \draw[gray!60, thin] (0,0) -- (0/1, 0) -- (1/2, 1) -- (3/2, 2) -- (5/2, 3) -- (3/1, 4);
    \end{scope}

    \draw[blue!60,thick] (0,0) -- (0,1) -- (0,2) -- (0,3) -- (1,3) -- (1,4) -- (2,4) -- (3,4);

    \draw (0.5,0.5) circle (0.4cm) node {$1$};
    \draw (0.5,1.5) circle (0.4cm) node {$2$};
    \draw (0.5,2.5) circle (0.4cm) node {$4$};
    \draw (1.5,3.5) circle (0.4cm) node {$3$};
    \draw (-0.5,1.5) node {$\ast$};
    \draw (-0.5,2.5) node {$\ast$};

      \node[below left] at (3,0) {{$\area = 1$}};
\end{tikzpicture}\newline

\begin{tikzpicture}[scale=0.45]
    \draw[draw=none, use as bounding box] (-1,-1) rectangle (4,5);
    \draw[step=1.0, gray!60, thin] (0,0) grid (3,4);

    \begin{scope}
        \clip (0,0) rectangle (3,4);
        \draw[gray!60, thin] (0,0) -- (0/1, 0) -- (1/2, 1) -- (3/2, 2) -- (5/2, 3) -- (3/1, 4);
    \end{scope}

    \draw[blue!60,thick] (0,0) -- (0,1) -- (0,2) -- (0,3) -- (1,3) -- (1,4) -- (2,4) -- (3,4);

    \draw (0.5,0.5) circle (0.4cm) node {$1$};
    \draw (0.5,1.5) circle (0.4cm) node {$3$};
    \draw (0.5,2.5) circle (0.4cm) node {$4$};
    \draw (1.5,3.5) circle (0.4cm) node {$2$};
    \draw (-0.5,1.5) node {$\ast$};
    \draw (-0.5,2.5) node {$\ast$};

      \node[below left] at (3,0) {{$\area = 1$}};
\end{tikzpicture}
\begin{tikzpicture}[scale=0.45]
    \draw[draw=none, use as bounding box] (-1,-1) rectangle (4,5);
    \draw[step=1.0, gray!60, thin] (0,0) grid (3,4);

    \begin{scope}
        \clip (0,0) rectangle (3,4);
        \draw[gray!60, thin] (0,0) -- (0/1, 0) -- (1/2, 1) -- (3/2, 2) -- (5/2, 3) -- (3/1, 4);
    \end{scope}

    \draw[blue!60,thick] (0,0) -- (0,1) -- (0,2) -- (0,3) -- (1,3) -- (1,4) -- (2,4) -- (3,4);

    \draw (0.5,0.5) circle (0.4cm) node {$2$};
    \draw (0.5,1.5) circle (0.4cm) node {$3$};
    \draw (0.5,2.5) circle (0.4cm) node {$4$};
    \draw (1.5,3.5) circle (0.4cm) node {$1$};
    \draw (-0.5,1.5) node {$\ast$};
    \draw (-0.5,2.5) node {$\ast$};

      \node[below left] at (3,0) {{$\area = 1$}};
\end{tikzpicture}
\begin{tikzpicture}[scale=0.45]
    \draw[draw=none, use as bounding box] (-1,-1) rectangle (4,5);
    \draw[step=1.0, gray!60, thin] (0,0) grid (3,4);

    \begin{scope}
        \clip (0,0) rectangle (3,4);
        \draw[gray!60, thin] (0,0) -- (0/1, 0) -- (1/2, 1) -- (3/2, 2) -- (5/2, 3) -- (3/1, 4);
    \end{scope}

    \draw[blue!60,thick] (0,0) -- (0,1) -- (0,2) -- (0,3) -- (0,4) -- (1,4) -- (2,4) -- (3,4);

    \draw (0.5,0.5) circle (0.4cm) node {$1$};
    \draw (0.5,1.5) circle (0.4cm) node {$2$};
    \draw (0.5,2.5) circle (0.4cm) node {$3$};
    \draw (0.5,3.5) circle (0.4cm) node {$4$};
    \draw (-0.5,1.5) node {$\ast$};
    \draw (-0.5,2.5) node {$\ast$};

      \node[below left] at (3,0) {{$\area = 2$}};
\end{tikzpicture}
\begin{tikzpicture}[scale=0.45]
    \draw[draw=none, use as bounding box] (-1,-1) rectangle (4,5);
    \draw[step=1.0, gray!60, thin] (0,0) grid (3,4);

    \begin{scope}
        \clip (0,0) rectangle (3,4);
        \draw[gray!60, thin] (0,0) -- (0/1, 0) -- (1/2, 1) -- (3/2, 2) -- (2/1, 3) -- (3/1, 4);
    \end{scope}

    \draw[blue!60,thick] (0,0) -- (0,1) -- (0,2) -- (1,2) -- (1,3) -- (1,4) -- (2,4) -- (3,4);

    \draw (0.5,0.5) circle (0.4cm) node {$1$};
    \draw (0.5,1.5) circle (0.4cm) node {$2$};
    \draw (1.5,2.5) circle (0.4cm) node {$3$};
    \draw (1.5,3.5) circle (0.4cm) node {$4$};
    \draw (-0.5,1.5) node {$\ast$};
    \draw (0.5,3.5) node {$\ast$};

      \node[below left] at (3,0) {{$\area = 0$}};
\end{tikzpicture}
\begin{tikzpicture}[scale=0.45]
    \draw[draw=none, use as bounding box] (-1,-1) rectangle (4,5);
    \draw[step=1.0, gray!60, thin] (0,0) grid (3,4);

    \begin{scope}
        \clip (0,0) rectangle (3,4);
        \draw[gray!60, thin] (0,0) -- (0/1, 0) -- (1/2, 1) -- (3/2, 2) -- (2/1, 3) -- (3/1, 4);
    \end{scope}

    \draw[blue!60,thick] (0,0) -- (0,1) -- (0,2) -- (1,2) -- (1,3) -- (1,4) -- (2,4) -- (3,4);

    \draw (0.5,0.5) circle (0.4cm) node {$1$};
    \draw (0.5,1.5) circle (0.4cm) node {$3$};
    \draw (1.5,2.5) circle (0.4cm) node {$2$};
    \draw (1.5,3.5) circle (0.4cm) node {$4$};
    \draw (-0.5,1.5) node {$\ast$};
    \draw (0.5,3.5) node {$\ast$};

      \node[below left] at (3,0) {{$\area = 0$}};
\end{tikzpicture}
\begin{tikzpicture}[scale=0.45]
    \draw[draw=none, use as bounding box] (-1,-1) rectangle (4,5);
    \draw[step=1.0, gray!60, thin] (0,0) grid (3,4);

    \begin{scope}
        \clip (0,0) rectangle (3,4);
        \draw[gray!60, thin] (0,0) -- (0/1, 0) -- (1/2, 1) -- (3/2, 2) -- (2/1, 3) -- (3/1, 4);
    \end{scope}

    \draw[blue!60,thick] (0,0) -- (0,1) -- (0,2) -- (1,2) -- (1,3) -- (1,4) -- (2,4) -- (3,4);

    \draw (0.5,0.5) circle (0.4cm) node {$1$};
    \draw (0.5,1.5) circle (0.4cm) node {$4$};
    \draw (1.5,2.5) circle (0.4cm) node {$2$};
    \draw (1.5,3.5) circle (0.4cm) node {$3$};
    \draw (-0.5,1.5) node {$\ast$};
    \draw (0.5,3.5) node {$\ast$};

      \node[below left] at (3,0) {{$\area = 0$}};
\end{tikzpicture}\newline

\begin{tikzpicture}[scale=0.45]
    \draw[draw=none, use as bounding box] (-1,-1) rectangle (4,5);
    \draw[step=1.0, gray!60, thin] (0,0) grid (3,4);

    \begin{scope}
        \clip (0,0) rectangle (3,4);
        \draw[gray!60, thin] (0,0) -- (0/1, 0) -- (1/2, 1) -- (3/2, 2) -- (2/1, 3) -- (3/1, 4);
    \end{scope}

    \draw[blue!60,thick] (0,0) -- (0,1) -- (0,2) -- (1,2) -- (1,3) -- (1,4) -- (2,4) -- (3,4);

    \draw (0.5,0.5) circle (0.4cm) node {$2$};
    \draw (0.5,1.5) circle (0.4cm) node {$3$};
    \draw (1.5,2.5) circle (0.4cm) node {$1$};
    \draw (1.5,3.5) circle (0.4cm) node {$4$};
    \draw (-0.5,1.5) node {$\ast$};
    \draw (0.5,3.5) node {$\ast$};

      \node[below left] at (3,0) {{$\area = 0$}};
\end{tikzpicture}
\begin{tikzpicture}[scale=0.45]
    \draw[draw=none, use as bounding box] (-1,-1) rectangle (4,5);
    \draw[step=1.0, gray!60, thin] (0,0) grid (3,4);

    \begin{scope}
        \clip (0,0) rectangle (3,4);
        \draw[gray!60, thin] (0,0) -- (0/1, 0) -- (1/2, 1) -- (3/2, 2) -- (2/1, 3) -- (3/1, 4);
    \end{scope}

    \draw[blue!60,thick] (0,0) -- (0,1) -- (0,2) -- (1,2) -- (1,3) -- (1,4) -- (2,4) -- (3,4);

    \draw (0.5,0.5) circle (0.4cm) node {$2$};
    \draw (0.5,1.5) circle (0.4cm) node {$4$};
    \draw (1.5,2.5) circle (0.4cm) node {$1$};
    \draw (1.5,3.5) circle (0.4cm) node {$3$};
    \draw (-0.5,1.5) node {$\ast$};
    \draw (0.5,3.5) node {$\ast$};

      \node[below left] at (3,0) {{$\area = 0$}};
\end{tikzpicture}
\begin{tikzpicture}[scale=0.45]
    \draw[draw=none, use as bounding box] (-1,-1) rectangle (4,5);
    \draw[step=1.0, gray!60, thin] (0,0) grid (3,4);

    \begin{scope}
        \clip (0,0) rectangle (3,4);
        \draw[gray!60, thin] (0,0) -- (0/1, 0) -- (1/2, 1) -- (3/2, 2) -- (2/1, 3) -- (3/1, 4);
    \end{scope}

    \draw[blue!60,thick] (0,0) -- (0,1) -- (0,2) -- (1,2) -- (1,3) -- (1,4) -- (2,4) -- (3,4);

    \draw (0.5,0.5) circle (0.4cm) node {$3$};
    \draw (0.5,1.5) circle (0.4cm) node {$4$};
    \draw (1.5,2.5) circle (0.4cm) node {$1$};
    \draw (1.5,3.5) circle (0.4cm) node {$2$};
    \draw (-0.5,1.5) node {$\ast$};
    \draw (0.5,3.5) node {$\ast$};

      \node[below left] at (3,0) {{$\area = 0$}};
\end{tikzpicture}
\begin{tikzpicture}[scale=0.45]
    \draw[draw=none, use as bounding box] (-1,-1) rectangle (4,5);
    \draw[step=1.0, gray!60, thin] (0,0) grid (3,4);

    \begin{scope}
        \clip (0,0) rectangle (3,4);
        \draw[gray!60, thin] (0,0) -- (0/1, 0) -- (1/2, 1) -- (3/2, 2) -- (2/1, 3) -- (3/1, 4);
    \end{scope}

    \draw[blue!60,thick] (0,0) -- (0,1) -- (0,2) -- (0,3) -- (0,4) -- (1,4) -- (2,4) -- (3,4);

    \draw (0.5,0.5) circle (0.4cm) node {$1$};
    \draw (0.5,1.5) circle (0.4cm) node {$2$};
    \draw (0.5,2.5) circle (0.4cm) node {$3$};
    \draw (0.5,3.5) circle (0.4cm) node {$4$};
    \draw (-0.5,1.5) node {$\ast$};
    \draw (-0.5,3.5) node {$\ast$};

      \node[below left] at (3,0) {{$\area = 1$}};
\end{tikzpicture}
\begin{tikzpicture}[scale=0.45]
    \draw[draw=none, use as bounding box] (-1,-1) rectangle (4,5);
    \draw[step=1.0, gray!60, thin] (0,0) grid (3,4);

    \begin{scope}
        \clip (0,0) rectangle (3,4);
        \draw[gray!60, thin] (0,0) -- (0/1, 0) -- (1/2, 1) -- (1/1, 2) -- (2/1, 3) -- (3/1, 4);
    \end{scope}

    \draw[blue!60,thick] (0,0) -- (0,1) -- (0,2) -- (0,3) -- (0,4) -- (1,4) -- (2,4) -- (3,4);

    \draw (0.5,0.5) circle (0.4cm) node {$1$};
    \draw (0.5,1.5) circle (0.4cm) node {$2$};
    \draw (0.5,2.5) circle (0.4cm) node {$3$};
    \draw (0.5,3.5) circle (0.4cm) node {$4$};
    \draw (-0.5,2.5) node {$\ast$};
    \draw (-0.5,3.5) node {$\ast$};

      \node[below left] at (3,0) {{$\area = 0$}};
\end{tikzpicture}
\caption{The set of $3 \times 4$ standard rectangular Dyck paths with two decorated rises, with their area.}
\label{fig:rectangular-delta}
\end{figure}

\begin{conjecture}
    \label{conj:rectangular-delta}
    For any $m, n \in \mathbb{N}$, and $d = \gcd(m,n)$, we have \[ \left. \frac{[m+k]_q}{[d]_q} \Theta_{e_k} p_{m,n} \right\rvert_{q=1} = \sum_{\pi \in \LRP(m+k,n+k)^{\ast k}} t^{\area(\pi)} x^\pi. \]
\end{conjecture}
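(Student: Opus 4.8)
The plan is to specialize at $q = 1$ and bootstrap from the case $k = 0$, which is exactly the $q=1$ specialization of \cref{conjecture:rectangular-paths}: since $e_0 = 1$ forces $\Theta_{e_0} = \mathsf{id}$, the $k = 0$ statement reads $\tfrac{[m]_q}{[d]_q} p_{m,n}\big|_{q=1} = \sum_{\pi \in \LRP(m,n)} t^{\area(\pi)} x^\pi$, a theorem of this paper when $\gcd(m,n) = 1$. For general $k$ one would set up two recursions in $k$ — one algebraic, one combinatorial — and match them, so that iterating down to $k = 0$ closes the argument (unconditionally for coprime sides, and in general conditionally on the non-coprime case of \cref{conjecture:rectangular-paths}).

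On the algebraic side the key input is the behaviour of $\Theta_{e_k} = \mathbf{\Pi}\,\underline{e_k[X/M]}\,\mathbf{\Pi}^{-1}$ at $q = 1$. In $\Pi_\mu = \prod_{c \in \mu / (1)}(1 - q^{a'_\mu(c)} t^{l'_\mu(c)})$ the first-row cells of $\mu / (1)$ contribute factors $1 - q, 1 - q^2, \dots, 1 - q^{\mu_1 - 1}$, so $\mathbf{\Pi}$ vanishes to order $\mu_1 - 1$ along $q = 1$ on the $\Ht_\mu$-eigenspace, while $e_k[X/M]$ has a pole of order $k$ there, with leading term $\tfrac{1}{k!}\, e_1^k / M^k$. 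Cancelling pole against zero leaves a finite operator $\Theta_{e_k}\big|_{q=1}$ that one can write explicitly in terms of $\Delta$-type operators and multiplication by $e_1$; combined with $p_{m,n} = F_{a,b}(p_d)$ and the structural properties of the $Q_{a,b}$-operators, the goal is a recursion $\tfrac{[m+k]_q}{[d]_q}\Theta_{e_k} p_{m,n}\big|_{q=1} = \mathsf{T}_k\!\left(\tfrac{[m+k-1]_q}{[d]_q}\Theta_{e_{k-1}} p_{m,n}\big|_{q=1}\right)$ for an explicit $\mathsf{T}_k$ that also carries the prefactor ratio $[m+k]_q / [m+k-1]_q \to (m+k)/(m+k-1)$.

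On the combinatorial side, given $(\pi, dr, w) \in \LRP(m+k, n+k)^{\ast k}$ with $k \geq 1$, one removes a distinguished decorated rise — say the topmost — together with its labelled North step and one suitably chosen East step, shortening the slope-$1$ piece of the broken diagonal, to land in $\LRP(m+k-1, n+k-1)^{\ast(k-1)}$; summing over the undone choices (the position of the deleted East step and the deleted label) exhibits $\sum_{\pi} t^{\area(\pi)} x^\pi$ as $\mathsf{C}_k$ applied to the corresponding sum over $\LRP(m+k-1, n+k-1)^{\ast(k-1)}$. The delicate point is that the deletion must preserve $\area$: a decorated row carries no area by definition, but one still has to control the effect of removing it on the remaining entries $a_i$ of the area word and on the shift $s$, and account for the contribution of the deleted label to $x^\pi$.

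One then finishes by proving $\mathsf{T}_k = \mathsf{C}_k$ under the dictionary (symmetric function) $\leftrightarrow$ (weighted path sum) and iterating to $k = 0$. I expect this identification to be the main obstacle: it demands knowing $\Theta_{e_k}\big|_{q=1}$ on the image of $F_{a,b}$ exactly — not just its leading power-sum content — and showing that, in the $q = 1$ generating function, the decorated slope-$1$ rows decouple cleanly from the slope-$n/m$ geometry of the path, so that inserting a decorated rise is literally the plethystic operation that $\Theta_{e_k}$ performs relative to $\Theta_{e_{k-1}}$. A possible alternative to the algebraic half is to relate $p_{m,n}$ to the $e_{m',n'}$, invoke the rectangular shuffle theorem (\cref{thm:rectangular-dyck}) at $q = 1$, and reassemble the decorated-path sum via a sweep-map style bijection, at the cost of heavier combinatorial bookkeeping.
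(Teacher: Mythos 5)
You are attempting to prove a statement that the paper itself leaves open: \Cref{conj:rectangular-delta} is stated only as a conjecture, supported by computer checks up to semiperimeter $13$, so there is no proof in the paper to compare against, and the only relevant theorems available are \Cref{thm:rectangular-dyck} and the coprime, undecorated case \Cref{thm:coprime-case}. Your outline does not close the gap. On the algebraic side, everything hinges on an explicit, usable form of $\Theta_{e_k}\big\rvert_{q=1}$ acting on $p_{m,n}$ and on an operator recursion $\mathsf{T}_k$ in $k$ carrying the prefactor ratio; neither is derived. The zero/pole count you give is heuristic: $\Theta_{e_k} = \mathbf{\Pi}\, e_k[X/M]\, \mathbf{\Pi}^{-1}$ involves $\mathbf{\Pi}^{-1}$ on the input, re-expansion of the plethystic multiplication in the Macdonald basis, and $\mathbf{\Pi}$ on the output, so the orders of vanishing do not simply cancel eigenvalue by eigenvalue, and the claimed ``finite operator expressible via $\Delta$-type operators and $e_1$'' is asserted rather than established; even granting it, you would still need its compatibility with the products $\prod_i Q_{\lambda_i a, \lambda_i b}(1)$ defining $F_{a,b}(p_d)$, which is precisely the kind of elliptic-Hall-algebra computation the paper does not have.

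The combinatorial half has concrete failure points as well. Deleting the topmost decorated rise together with its North step and a chosen East step need not preserve the labelling condition (the two North steps made adjacent by the deletion may carry weakly decreasing labels), and it changes the broken diagonal, hence all the entries $a_i$ of the area word and possibly the shift $s$; so area preservation is exactly what must be proved, not an expected side effect, and the count of ``undone choices'' has to reproduce the analytic factor $(m+k)/(m+k-1)$ together with the correct monomial weight of the deleted label --- this is where insertion/deletion arguments for decorated paths typically break, and the paper's own remark that ignoring decorations does not give the right $q$-statistic suggests the decoupling you hope for is not clean. Finally, your base case $k=0$ at $q=1$ is \Cref{conjecture:rectangular-paths}, proved only for $\gcd(m,n)=1$, so even if both recursions existed the result would be conditional for $d>1$; and the identification $\mathsf{T}_k = \mathsf{C}_k$, which you yourself flag as the main obstacle, is the entire content of the conjecture, so the proposal as written reduces nothing.
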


Conjectures~\ref{conj:rectangular-dyck-delta} and \ref{conj:rectangular-delta} have been checked by computer up to semiperimeter $13$. See Figure~\ref{fig:rectangular-delta} for the case $m=1$, $n=2$, $k=2$: indeed \[ \left. \< \Theta_{e_2} e_{2,1}, h_{1^4} \> \right\rvert_{q=1} = t^2 + 5t + 11, \] which coincides with the combinatorial expression.

These conjectures bring with themselves a natural open problem.

\begin{problem}
    Find a statistic $\mathsf{qstat} \colon \LRP(m+k,n+k)^{\ast k} \rightarrow \mathbb{N}$ such that \[ \Theta_{e_k} e_{m,n} = \sum_{\pi \in \LRD(m+k,n+k)^{\ast k}} q^{\mathsf{qstat}(\pi)} t^{\area(\pi)} x^\pi \] and \[ \frac{[m+k]_q}{[d]_q} \Theta_{e_k} p_{m,n} = \sum_{\pi \in \LRP(m+k,n+k)^{\ast k}} q^{\mathsf{qstat}(\pi)}t^{\area(\pi)} x^\pi. \]
\end{problem}

Unlike in the square case, simply ignoring the decorations on the rises to compute the dinv does not give the expected $\mathsf{qstat}$.

\section{The sweep process}

In this section, we show that the sweep process in \cite{mellit2021toric}*{Subsection~4.1} also gives the correct outcome for rectangular paths, without the restriction of staying above the main diagonal.

We refer to \cite{mellit2021toric}*{Proposition~3.3} for the definitions of the operators $d_+$ and $d_-$, to \cite{mellit2021toric}*{Subsection~3.5} for the definition of characteristic function of a Dyck path with a marking, and to \cite{mellit2021toric}*{Section~4} and the first paragraph of \cite{mellit2021toric}*{Theorem~4.2} for how they relate to the following sweep process. We do not report all the definitions here because we are only interested in certain combinatorial properties of the sweep process and how they change between rectangular Dyck paths and rectangular paths, rather than in the process itself, but we encourage the interested reader to compare \Cref{thm:sweep} and \cite{mellit2021toric}*{Theorem~4.2}.

\begin{definition}[Sweep process]
    For $\pi \in \RP(m, n)$, define $\sweep(\pi)$ through the algorithm that follows. 
    Initialize $\varphi = 1 \in V_0$.
    Consider a line $l$ with slope $\frac{n}{m} - \epsilon$, with $\epsilon < \frac{1}{(2mn)^2}$ (so that it ``breaks ties'' but does not change the order in which the lattice points are hit with respect to a line with slope $\frac{m}{n}$), which stays fully above $\pi$.
    Move $l$ downward and modify $\varphi$ every time $l$ passes through a lattice point $p$ weakly below $\pi$ and different from $(m, n)$.
    At each lattice point $p$, modify $\varphi$ as follows:
    \begin{itemize}
        \item[(A)] if $p$ is between a NE pair of steps, apply $d_+$;
        \item[(B)] if $p$ is between an EN pair of steps, or $p = (0, 0)$ and the path starts with a N step, apply $d_-$;
        \item[(C)] if $p$ is between a NN pair of steps, apply $q^{-a}\frac{d_-d_+ - d_+d_-}{q-1}$, where $a$ is the number of vertical steps of $\pi$ crossed by $l$ to the right of $p$;
        \item[(D)] if $p$ is between an EE pair of steps, or $p = (0, 0)$ and the path starts with an E step, multiply by $q^a$ (where $a$ is defined as in the previous case);
        \item[(E)] if $p$ is strictly below $\pi$, multiply by $t$.
    \end{itemize}
    The algorithm stops when $l$ is entirely below the path $\pi$.
\end{definition}
See Figure~\ref{fig:sweeping} for an illustration of the sweeping process.

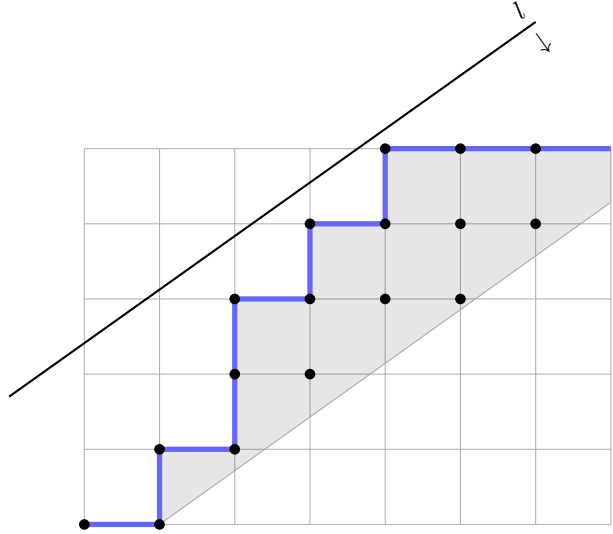
\begin{figure}
    \centering
    \begin{tikzpicture}[scale=1]
        \draw[step=1.0, gray!60, thin] (0,0) grid (7,5);
        \draw[gray!60, thin] (1,0) -- (7,30/7);
        \fill[opacity = .1] (0,0) -- (1,0) -- (1,1) -- (2,1) -- (2,2) -- (2,3) -- (3,3) -- (3,4) -- (4,4) -- (4,5) -- (5,5) -- (6,5) -- (7,5) -- (7,30/7) -- (1,0);
        
        \draw[blue!60, line width=2pt] (0,0) -- (1,0) -- (1,1) -- (2,1) -- (2,2) -- (2,3) -- (3,3) -- (3,4) -- (4,4) -- (4,5) -- (5,5) -- (6,5) -- (7,5);
        \fill 
        (0,0) circle (2pt)
        (1,0) circle (2pt)
        (1,1) circle (2pt)
        (2,1) circle (2pt)
        (2,2) circle (2pt)
        (3,2) circle (2pt)
        (2,3) circle (2pt)
        (3,3) circle (2pt)
        (3,4) circle (2pt)
        (4,3) circle (2pt)
        (4,4) circle (2pt)
        (4,5) circle (2pt)
        (5,3) circle (2pt)
        (5,4) circle (2pt)
        (5,5) circle (2pt)
        (6,4) circle (2pt)
        (6,5) circle (2pt); 
    
        \draw[thick] (-1,1.7) -- (6,6.7 - 0.014) node[pos = .99, above, sloped] {$l$} node[pos = .99, below, sloped] {$\downarrow$};
    \end{tikzpicture}
    \caption{The sweeping process.}\label{fig:sweeping}
\end{figure}

\begin{theorem}
    \label{thm:sweep}
    For $\pi$ any rectangular path, we have \[ \sweep(\pi) = t^{\area(\pi)} \sum_{w \in W(\pi)} q^{\dinv(\pi, w)} x^w, \]
    where $W(\pi)$ is the set of possible labellings of $\pi$.
\end{theorem}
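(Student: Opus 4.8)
The strategy is to reduce to the rectangular Dyck case, i.e.\ to \cite{mellit2021toric}*{Theorem~4.2}, using the same construction as in the proof of \Cref{thm:cdinv}. Given $\pi \in \RP(m,n)$, let $\pi'$ be the path obtained by prepending $n$ North steps and appending $m$ East steps; as in the proof of \Cref{thm:cdinv} one has $\pi' \in \RD(2m,2n)$, $\mu(\pi') = \mu(\pi)$, and $\cdinv(\pi') = \cdinv(\pi)$, while the attack relations of $\pi'$ are exactly the attack relations of $\pi$ (with both indices shifted up by $n$), together with one extra attack of a prepended North step on the $(n+i)$-th step for each $i$ with $a_i(\pi) < 0$, and one extra attack of the $(n+i)$-th step on a prepended North step for each $i$ with $a_i(\pi) < -\frac{m}{n}$. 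Then \cite{mellit2021toric}*{Theorem~4.2} gives $\sweep(\pi') = t^{\area(\pi')} \sum_{w' \in W(\pi')} q^{\dinv(\pi',w')} x^{w'}$, and the plan is to extract from this the desired identity for $\sweep(\pi)$.

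First I would compare the two sweep processes lattice point by lattice point, using that an $\epsilon$-perturbed sweeping line hits the lattice points of a path in a fixed order. The $m$ appended East steps are harmless: each of the points $(m,2n),(m+1,2n),\dots,(2m-1,2n)$ lies between an EE pair with no vertical steps of $\pi'$ to its right, so rule~(D) contributes the scalar $q^0 = 1$. The $n$ prepended North steps contribute through rules~(B) and~(C), and their effect, together with the lattice points that become strictly below $\pi$ after the prepending and hence trigger rule~(E), must be shown to multiply the outcome by $t^{\area(\pi') - \area(\pi)}$ times an explicit ``column contribution''. The crucial observation — already implicit in the proof of \Cref{thm:cdinv} — is that the sweeping line meets the bottom of the $(n+i)$-th North step of $\pi'$ before (resp.\ after) the relevant point of the prepended column exactly when $a_i(\pi) < 0$ (resp.\ $a_i(\pi) < -\frac{m}{n}$), which is precisely what creates the two families of extra attack relations above; so, although the operators attached to the prepended column are interleaved in sweeping order with those coming from the $\pi$-part, they can be reorganized to act on the symmetric function $\sum_{w \in W(\pi)} q^{\dinv(\pi,w)} x^w$. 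This identifies $\sweep(\pi')$ with $\Phi\!\left( t^{\area(\pi)}\sum_{w \in W(\pi)} q^{\dinv(\pi,w)} x^w \right)$ for an explicit operator $\Phi$ (depending on $\pi$ only through the counts $\#\{i : a_i(\pi) < 0\}$ and $\#\{i : a_i(\pi) < -\frac{m}{n}\}$) which is injective on the relevant graded component.

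On the combinatorial side one verifies the matching identity directly: a labelling $w' \in W(\pi')$ is the same datum as a labelling $w \in W(\pi)$ of the last $n$ North steps together with a strictly increasing labelling $\ell_1 < \dots < \ell_n$ of the prepended column (subject, when $\pi$ starts with a North step, to $\ell_n < w_1$), and the description of the extra attack relations yields $\dinv(\pi',w') = \dinv(\pi,w) + \#\{i : a_i(\pi)<0,\ \ell_{j(i)}<w_i\} + \#\{i : a_i(\pi)<-\tfrac{m}{n},\ w_i<\ell_{j'(i)}\} - \#\{i : a_i(\pi)<0\}$ together with the corresponding computation of $\area(\pi') - \area(\pi)$; summing over the column labels reproduces exactly $\Phi$ applied to $t^{\area(\pi)}\sum_{w \in W(\pi)} q^{\dinv(\pi,w)} x^w$. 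Comparing with the previous paragraph, $\sweep(\pi)$ and $t^{\area(\pi)}\sum_{w \in W(\pi)} q^{\dinv(\pi,w)} x^w$ have the same image under the injective $\Phi$, hence coincide, which is the claim.

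I expect the main obstacle to be the middle step: one must check that the non-commuting operators of rules~(B) and~(C) attached to the prepended column — genuinely interleaved in the sweeping order with the $\pi$-operators, since the below-diagonal North steps of $\pi$ are swept among them — can be moved past the $\pi$-part to produce a clean factor $\Phi$, and one must reconcile the (possibly fractional) shift $s$ of $\pi$, which enters $\area(\pi)$ through the floor functions, with the integral area of the Dyck path $\pi'$. If the direct comparison becomes unwieldy, a fallback is to induct on $\#\{i \mid a_i(\pi) < 0\}$ — the base case $\#\{i \mid a_i(\pi) < 0\} = 0$ being exactly the rectangular Dyck case — removing one below-diagonal North step at a time by a local modification of $\pi$ and controlling the resulting changes in $\sweep$, $\area$, and $\dinv$ through rules~(C) and~(E).
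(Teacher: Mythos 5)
There is a genuine gap, and it sits exactly where you flag it: your whole argument funnels through the claim that $\sweep(\pi') = \Phi\bigl(\sweep(\pi)\bigr)$ for an explicit operator $\Phi$ that is injective on the relevant graded component, obtained by ``reorganizing'' the operators attached to the prepended column. But the lattice points of that column are swept interleaved with the points of $\pi$ lying below the main diagonal, and the operators involved ($d_-$ and $\frac{d_-d_+-d_+d_-}{q-1}$ from rules (B) and (C)) do not commute with the operators coming from the $\pi$-part; you give no mechanism for moving them past each other, and no proof of injectivity of $\Phi$. The combinatorial half has the same problem in mirror image: without an independent identification of $\Phi$ on the operator side, saying that summing over the column labels ``reproduces exactly $\Phi$'' is circular, and the claimed formula for $\dinv(\pi',w')$ and for $\area(\pi')-\area(\pi)$ is asserted rather than checked. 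The fallback induction on $\#\{i \mid a_i(\pi)<0\}$ is likewise only named, not carried out: the effect of a ``local modification'' on $\sweep$, $\dinv$ and $\area$ is precisely the kind of statement that needs proof.

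Note how the paper's proof avoids this operator-commutation problem altogether. The only non-scalar content of the sweep process is the word in $d_+$, $d_-$ coming from rules (A), (B), (C) \emph{without} the factors $q^{\mp a}$, and by the first paragraph of \cite{mellit2021toric}*{Theorem~4.2} (whose attack-relation construction applies verbatim to rectangular paths that go below the diagonal) this word computes $\chi(\tilde\pi,\Sigma_\pi)=\sum_{w\in W(\pi)}q^{\tdinv(\pi,w)}x^w$ for $\pi$ itself. The factors $q^{\pm a}$ and $t$ are central scalars, so all that remains is bookkeeping of exponents: rule (E) gives $t^{\area(\pi)}$, and the total $q$-exponent from rules (C) and (D) is evaluated by comparing, point by point, its value on $\pi$ and on the auxiliary Dyck path $\pi'$, where the Dyck case of \cite{mellit2021toric}*{Theorem~4.2} identifies it with $\cdinv(\pi')=\cdinv(\pi)$; the difference is a crossing count equal to $-\#\{i\mid a_i(\pi)<0\}$. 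So in the paper $\pi'$ is used only to compare a single integer, never to relate the symmetric functions $\sweep(\pi')$ and $\sweep(\pi)$. If you want to salvage your route, the realistic fix is to replace the unproven factorization through $\Phi$ by this scalar comparison, i.e.\ to use the attack-relation/characteristic-function fact for $\pi$ directly rather than trying to deduce $\sweep(\pi)$ from $\sweep(\pi')$.
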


\begin{proof}
    As in \cite{mellit2021toric}*{Theorem~4.2}, plotting the attack relations gives a Dyck path $\tilde\pi$ with a set of marked corners $\Sigma_\pi$ such that
    \[ \chi(\tilde\pi, \Sigma_\pi) = \sum_{w \in W(\pi)} q^{\tdinv(\pi, w)} x^w, \]
    where $\chi(\tilde\pi, \Sigma_\pi)$ is the characteristic function of a Dyck path (see \cite{mellit2021toric}*{Subsection~3.5}) and such that $\chi(\tilde\pi, \Sigma_\pi)$ is the result of the operations $(A), (B)$, and $(C)$ without the factor $q^{-a}$.

    It is also clear that operation $(E)$ gives $t^{\area(\pi)}$, so all that is left to show is that the power of $q$ produced by rules $(C)$ and $(D)$ equals \[ + \# \left\{ c \in \mu(\pi) \mid \textstyle\frac{a+1}{\ell+1} \leq \frac{m}{n} < \frac{a}{\ell} \right\} - \# \left\{ c \in \mu(\pi) \mid \textstyle\frac{a}{\ell} \leq \frac{m}{n} < \frac{a+1}{\ell+1} \right\} + \# \{ i \mid a_i < 0 \}. \]

    Let us again define $\pi'$ to be the path obtained from $\pi$ by adding $n$ North steps at the beginning, and $m$ East steps at the end. Since $\pi'$ is a rectangular Dyck path, by the proof of \cite{mellit2021toric}*{Theorem~4.2} we know that the power of $q$ produced by rules $(C)$ and $(D)$ equals $\cdinv(\pi')$, which is also equal to $\cdinv(\pi)$ as it only depends on $\mu(\pi') = \mu(\pi)$.
    
    We need to compare the power of $q$ produced by rules $(C)$ and $(D)$ applied to $\pi'$ and $\pi$.
    The result is the same for lattice points in between steps of $\pi'$ that were already in $\pi$, as we are not adding any North step to their right. For the lattice points in between the last $m$ East steps of $\pi'$, the exponent of $q$ is always $0$, as their corresponding value of $a$ is $0$.
    
    For the lattice points in between first $n$ steps of $\pi'$, we have to apply rule $(C)$, so their total contribution is equal to minus the number of North steps of $\pi'$ intersected by any line with slope $\frac{n}{m} - \varepsilon$ starting from $(0,j)$ for some $j < n$ which is exactly the number of North steps of $\pi$ finishing strictly below the main diagonal, that is, the number of $i$ such that $a_i(\pi) < - \frac{m}{n}$.
    
    Finally, the point $(0,n)$ in $\pi$ switches from rule $(D)$ to rule $(A)$, or from rule $(B)$ to rule $(C)$, depending whether $\pi$ starts with an East or a North step respectively; in either case, the difference between its contributions in $\pi'$ and in $\pi$ is given by minus the number of North steps of $\pi$ that are crossed by the line with slope $\frac{n}{m} - \varepsilon$ starting from $(0,0)$, which is exactly the number of $i$ such that $- \frac{m}{n} \leq a_i(\pi) < 0$.
    
    In total, we get that the difference in the exponents of $q$ produced by rules $(C)$ and $(D)$ applied to $\pi'$ and $\pi$ is $- \# \{ i \mid a_i(\pi) < 0 \}$, so we have
    \begin{align*}
        \sweep(\pi) & = t^{\area(\pi)} q^{\cdinv(\pi)} q^{\# \{ i \mid a_i(\pi) < 0 \}} \sum_{w \in W(\pi)} q^{\tdinv(\pi, w)} x^w
        & = t^{\area(\pi)} \sum_{w \in W(\pi)} q^{\dinv(\pi, w)} x^w
    \end{align*}
    as desired.    
\end{proof}

\section{The coprime case}

In this section, we prove \Cref{conjecture:rectangular-paths} in the coprime case:

\begin{theorem}
    If $\gcd(m, n) = 1$, then
    \[ [m]_q \, p_{m,n} = \sum_{\pi \in \LRP(m,n)} q^{\dinv(\pi)} t^{\area(\pi)} x^\pi. \]
    \label{thm:coprime-case}
\end{theorem}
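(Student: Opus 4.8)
The plan is to deduce the statement from Mellit's rectangular shuffle theorem (\Cref{thm:rectangular-dyck}) and the sweep identity (\Cref{thm:sweep}), by a cycle-lemma argument that groups rectangular paths into orbits of size $m$, each containing exactly one rectangular Dyck path, and on which $\dinv$ varies by $[m]_q$.

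\textbf{Reduction to a combinatorial identity.} Since $\gcd(m,n)=1$ we have $d=1$, so $e_{m,n}=F_{m,n}(e_1)=F_{m,n}(p_1)=p_{m,n}$ because $e_1=p_1$. Hence, by \Cref{thm:rectangular-dyck}, $[m]_q\,p_{m,n}=[m]_q\sum_{\pi\in\RD(m,n)}\sweep(\pi)$, while by \Cref{thm:sweep} the right-hand side of the theorem equals $\sum_{\pi\in\RP(m,n)}\sweep(\pi)$. So it suffices to prove
\[ \sum_{\pi\in\RP(m,n)}\sweep(\pi)=[m]_q\sum_{\pi\in\RD(m,n)}\sweep(\pi). \]

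\textbf{Orbit structure and invariants.} Given $\pi\in\RP(m,n)$, extend it to the bi-infinite path $\hat\pi$ that is invariant under translation by $(m,n)$. Cutting $\hat\pi$ immediately after one of the $m$ East steps of a fundamental domain and reading off one period yields a rectangular path, and the $m$ choices of East step give $m$ (in general distinct) elements of $\RP(m,n)$ whose orbits partition $\RP(m,n)$. Since $\gcd(m,n)=1$, the classical cycle lemma gives exactly one rotation of $\hat\pi$ that lies weakly above the diagonal, and such a path must end with an East step (otherwise it would pass through $(m,n-1)$, which lies strictly below the diagonal); hence each orbit contains exactly one rectangular Dyck path. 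Because we only ever cut right after an East step, two consecutive North steps are never separated, so all $m$ paths in an orbit carry the same set of admissible labellings, canonically identified through the North steps of $\hat\pi$, and $x^w$ is the same for all of them. Moreover, writing $\delta_u$ for the signed horizontal distance from the origin of a North step $u$ of $\hat\pi$ to the diagonal of slope $\frac nm$, one checks that $\area$, the attack relation, hence $\maxtdinv$, and hence $\tdinv(\cdot,w)$ for every labelling $w$, all depend only on the multiset $\{\delta_u-\min_v\delta_v\}$, which is intrinsic to $\hat\pi$; so these are constant along an orbit.

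\textbf{Conclusion, modulo one claim.} By definition $\dinv(\pi,w)=\tdinv(\pi,w)+\cdinv(\pi)+\#\{i:a_i(\pi)<0\}$, and by the previous paragraph only the last two terms vary along an orbit $\{\pi_0,\dots,\pi_{m-1}\}$, with $\pi_0$ the Dyck representative. I would then prove that $\cdinv(\pi_i)+\#\{i:a_i(\pi_i)<0\}$ — equivalently, by \Cref{thm:cdinv}, $\pdinv(\pi_i)-\maxtdinv(\pi_i)-\#\{i:a_i(\pi_i)<-\tfrac mn\}$ — takes exactly the values $c_0,c_0+1,\dots,c_0+m-1$, where $c_0=\cdinv(\pi_0)$. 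Granting this, for each labelling $w$ the $m$ numbers $\dinv(\pi_i,w)$ are $\dinv(\pi_0,w),\dinv(\pi_0,w)+1,\dots,\dinv(\pi_0,w)+m-1$, so $\sum_i\sweep(\pi_i)=t^{\area(\pi_0)}\sum_w x^w\,q^{\dinv(\pi_0,w)}[m]_q=[m]_q\,\sweep(\pi_0)$, and summing over orbits gives the displayed identity.

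\textbf{The main obstacle.} The only nonformal step is the last claim: that the ``dinv correction plus bonus dinv'' ranges over $m$ consecutive integers along an orbit. I would attack it through the sweep process, where (by the proof of \Cref{thm:sweep}) $\cdinv(\pi)+\#\{i:a_i(\pi)<0\}$ is precisely the exponent of $q$ produced by rules (C) and (D), which only depend on the position of the sweeping line relative to the North steps lying to the right of the current lattice point. Passing from one representative of an orbit to the next amounts to translating a single lattice point of the fundamental domain by $(m,n)$; tracking the change of the exponents ``$a$'' in rules (C) and (D) under this move should show that, read in the cyclic order of the East steps starting from the Dyck representative, the values increase by exactly $1$ at each of the first $m-1$ steps (and return by dropping $m-1$ on the wrap-around). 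A more hands-on alternative is to describe the move $\pi_i\rightsquigarrow\pi_{i+1}$ as relocating the lowest East step of $\pi_i$ and to compute its effect on $\mu(\pi_i)$, hence on $\pdinv$, cell by cell, together with the change in $\#\{i:a_i<-\tfrac mn\}$. Either way, this bookkeeping is the technical heart of the argument; everything else is formal.
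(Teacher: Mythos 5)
Your strategy coincides with the paper's: reduce via \Cref{thm:rectangular-dyck} and \Cref{thm:sweep} to the identity $\sum_{\pi\in\RP(m,n)}\sweep(\pi)=[m]_q\sum_{\pi\in\RD(m,n)}\sweep(\pi)$, then partition $\RP(m,n)$ into orbits of size $m$ under cyclic rotation at East steps, each containing exactly one Dyck path by coprimality (this is the paper's $\mathcal{P}_i=\{\phi_k(\pi)\mid 0\le k<m\}$). Your observations that cutting only after East steps preserves the labelling constraints, the area, and (in the coprime case, where no ties occur) the attack relation and hence $\tdinv$, are correct and correspond to the paper's remark that the relative order of the swept lattice points is unchanged, so that $\sweep(\phi_k(\pi))$ and $\sweep(\pi)$ differ by a pure power of $q$. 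But the one claim you leave unproven is exactly the technical core of the paper's argument: \Cref{lemma:main_char}, which identifies that power as $q^{r_k(\pi)}$, where $r_k(\pi)$ is the signed count of East steps whose endpoints lie between the main diagonal and its translate through the endpoint of the $k$-th East step, together with the observation that $\{r_k(\pi)\mid 0\le k<m\}=\{0,\dots,m-1\}$. The paper spends Claims 1--4 (reducing the change in the $q$-exponent from rules (C) and (D) to intersection counts of lines of slope $n/m$ with vertical segments, then to a telescoping comparison of the translated segments $T_i$, and finally to $r_k(\pi)$) on precisely this; deferring it as ``bookkeeping'' leaves the proof genuinely incomplete, since everything else is, as you say, formal.

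Moreover, the mechanism you sketch for closing the gap is incorrect as stated: the $q$-shift does not increase by $1$ at each step when the rotations are read in the cyclic order of the East steps. It is governed by the rank of the distance $d_k$ of the cut point from the main diagonal, as Figure~\ref{fig:partition RP} illustrates: there the dinv values in cyclic order $k=0,1,2,3,4$ are $7,9,11,8,10$, not consecutive. So analysing the single move $\phi_k(\pi)\mapsto\phi_{k+1}(\pi)$ and expecting a uniform increment of $1$ would fail; what is true, and what must be proved, is that the multiset of shifts over the whole orbit is $\{0,\dots,m-1\}$, with the shift of $\phi_k(\pi)$ equal to the rank statistic $r_k(\pi)$ --- this is where coprimality enters (it makes the $m$ distances $d_k$ pairwise incongruent, so the ranks exhaust $\{0,\dots,m-1\}$). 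Your assertion that the Dyck representative attains the minimum is correct, but only as a consequence of this finer statement, not a substitute for it.
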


\begin{proof}
    Since $\gcd(m,n) = 1$, we have $e_{m,n} = p_{m,n} = F_{m,n}(e_1)$.
    Therefore, in order to prove \Cref{thm:coprime-case}, it is enough to show that the set of (unlabelled) rectangular paths $\RP(m,n)$ can be partitioned into subsets $\mathcal{P}_1, \dotsc, \mathcal{P}_h$ of cardinality $m$ such that:
    
    \begin{enumerate}[label=(\arabic*)]
        \item each $\mathcal{P}_i$ contains exactly one Dyck path $\pi_0 \in \RD(m,n)$;
        \item for each $\mathcal{P}_i$ and $0 \leq k < m$, there exists a (unique) element $\pi_k \in \mathcal{P}_i$ such that $\sweep(\pi_k) = q^k \sweep(\pi_0)$.
    \end{enumerate}
    
    Indeed, if such a partition exists, then
    \begin{align*}
        [m]_q \, p_{m,n} &= [m]_q \, e_{m,n} = [m]_q \sum_{\pi \in \LRD(m,n)} q^{\dinv(\pi)} t^{\area(\pi)} x^\pi \\
        &= [m]_q \sum_{\pi \in \RD(m,n)} \sweep(\pi) \\
        &= \sum_{\pi \in \RP(m,n)} \sweep(\pi) \\
        &= \sum_{\pi \in \LRP(m,n)} q^{\dinv(\pi)} t^{\area(\pi)} x^\pi,
    \end{align*}
    where we used \Cref{thm:rectangular-dyck} in the first line, \Cref{thm:sweep} in the second line, the partition $\RP(m,n) = \mathcal{P}_1 \sqcup \dotsb \sqcup \mathcal{P}_h$ in the third line, and \Cref{thm:sweep} again in the fourth line.
        
    Next, we construct a partition of $\RP(m,n)$ with the desired properties.
    Consider an (unlabelled) rectangular path $\pi \in \RP(m, n)$.
    Denote by $d_i \in \mathbb{Q}$ the signed horizontal distance between the endpoint of the $i$-th horizontal step of $\pi$ and the main diagonal (for $0 \leq i < k$).
    Fix now an integer $k$ with $0 \leq k < m$.
    The $k$-th horizontal step divides the path $\pi$ into two parts $\pi_0$ and $\pi_1$, where $\pi_1$ starts immediately after the $k$-th horizontal step and $\pi_0$ ends with the $k$-th horizontal step.
    Define the path $\phi(\pi)=\phi_k(\pi)$
    as the concatenation of $\pi_1$ followed by $\pi_0$ (we fix $\phi_0 = \mathsf{id}$).
    Also, let
    \[ r(\pi) = r_k(\pi)
    =\begin{cases} \# \left\{ i \mid d_k> d_i \geq 0 \right\} & \textnormal{if } d_k\geq 0 \\
        - \# \left\{ i \mid 0 \geq d_i > d_k \right\} & \textnormal{if } d_k < 0, \end{cases}\]
    that is, up to a sign, the number of horizontal steps whose endpoint lies between the main diagonal and the diagonal parallel to it that passes through the endpoint of the $k$-th horizontal step.
        
    We partition $\RP(m, n)$ as follows.
    If $\pi \in \RD(m,n)$ is the $i$-th Dyck path, define $\mathcal{P}_i = \{ \phi_k(\pi) \mid 0 \leq k < m \}$.
    The sets $\mathcal{P}_1, \dotsc, \mathcal{P}_h$ form a partition of $\RP(m, n)$.
    Since $\gcd(m, n) = 1$, $\mathcal{P}_i$ contains no Dyck path other than $\pi$, so the partition satisfies property (1) above.
    By definition of $r_k$, we have that $\{ r_k(\pi) \mid 0 \leq k < m \} = \{0, 1, \dotsc, m-1\}$. See Figure~\ref{fig:partition RP} for an example.
    Then the partition satisfies property (2) thanks to \Cref{lemma} below.
\end{proof}

\begin{figure}
    \centering
    \begin{minipage}{.99\textwidth}
        \centering
        \begin{tikzpicture}[scale=.6]
            \draw[draw=none, use as bounding box]  (-.5,-.5)rectangle(5.5,6.5);
            \draw[step=1.0, gray!60, thin] (0,0) grid (5,6);
            
            \begin{scope}
                \clip (0,0) rectangle (5,6);
                \draw[gray!60, thin] (0,0) -- (0/1, 0) -- (5/6, 1) -- (5/3, 2) -- (5/2, 3) -- (10/3, 4) -- (25/6, 5) -- (5/1, 6);
            \end{scope}

            \draw[blue!60, line width=2pt] (0,0) -- (0,1) -- (0,2) -- (1,2)node[midway,above]{\textcolor{black}{$2$}} -- (1,3) -- (1,4) -- (2,4)node[midway,above]{\textcolor{black}{$4$}} -- (3,4)node[midway,above]{\textcolor{black}{$1$}} -- (3,5) -- (3,6) -- (4,6) node[midway,above]{\textcolor{black}{$3$}} -- (5,6) node[midway,above]{\textcolor{black}{$0$}};

            \node[below] at (2.5,0) {$\pi = \phi_0(\pi) \colon \dinv = 7$};
        \end{tikzpicture}
    \end{minipage}
    \begin{minipage}{.99\textwidth}
        \begin{tikzpicture}[scale=.6]
            \draw[draw=none, use as bounding box]  (-.5,-.5)rectangle(5.5,6.5);
            \draw[step=1.0, gray!60, thin] (0,0) grid (5,6);

            \draw[blue!60, line width=2pt] (0,0) -- (0,1) -- (0,2) -- (1,2) -- (2,2) -- (2,3) -- (2,4) -- (3,4) -- (3,5) -- (3,6) -- (4,6) -- (5,6);

            \node[below] at (2.5,0) {$\phi_3(\pi) \colon \dinv = 8$};
        \end{tikzpicture}
        \begin{tikzpicture}[scale=.6]
            \draw[draw=none, use as bounding box]  (-.5,-.5)rectangle(5.5,6.5);
            \draw[step=1.0, gray!60, thin] (0,0) grid (5,6);

            \draw[blue!60, line width=2pt] (0,0) -- (0,1) -- (0,2) -- (1,2) -- (2,2) -- (2,3) -- (2,4) -- (3,4) -- (4,4) -- (4,5) -- (4,6) -- (5,6);

            \node[below] at (2.5,0) {$\phi_1(\pi) \colon \dinv = 9$};
        \end{tikzpicture}
        \begin{tikzpicture}[scale=.6]
            \draw[draw=none, use as bounding box]  (-.5,-.5)rectangle(5.5,6.5);
            \draw[step=1.0, gray!60, thin] (0,0) grid (5,6);
            
            \draw[blue!60, line width=2pt] (0,0) -- (1,0) -- (1,1) -- (1,2) -- (2,2) -- (2,3) -- (2,4) -- (3,4) -- (4,4) -- (4,5) -- (4,6) -- (5,6);

            \node[below] at (2.5,0) {$\phi_4(\pi) \colon \dinv = 10$};
        \end{tikzpicture}
        \begin{tikzpicture}[scale=.6]
            \draw[draw=none, use as bounding box]  (-.5,-.5)rectangle(5.5,6.5);
            \draw[step=1.0, gray!60, thin] (0,0) grid (5,6);

            \draw[blue!60, line width=2pt] (0,0) -- (1,0) -- (1,1) -- (1,2) -- (2,2) -- (3,2) -- (3,3) -- (3,4) -- (4,4) -- (4,5) -- (4,6) -- (5,6);

                \node[below] at (2.5,0) {$\phi_2(\pi) \colon \dinv = 11$};
        \end{tikzpicture}
    \end{minipage}
    \caption{A rectangular Dyck path $\pi$ and $\phi_k(\pi)$ for all $0 \leq k < m$. The horizontal steps of $\pi$ are marked by integers indicating their order with respect to the distance between their endpoint and the main diagonal.}
    \label{fig:partition RP}
\end{figure}
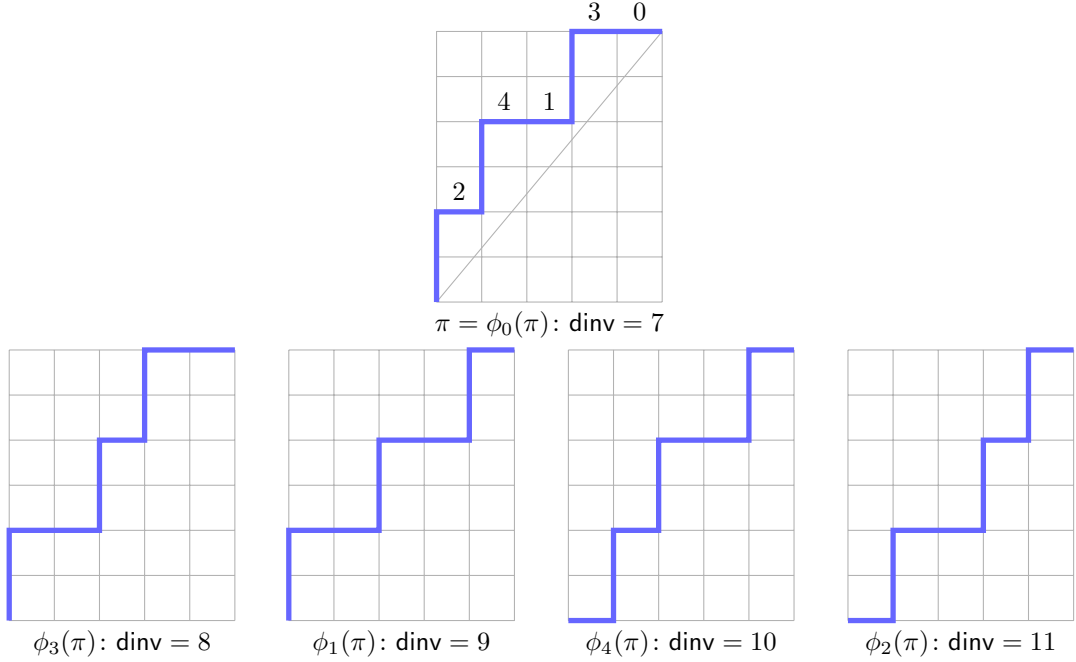
    
\begin{lemma}
    \label{lemma:main_char}
    If $\gcd (m,n)=1$, then $\sweep(\phi_k(\pi))= q^{r_k(\pi)}\sweep(\pi)$.
    \label{lemma}
\end{lemma}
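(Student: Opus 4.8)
The plan is to compare the sweep processes of $\pi$ and of $\phi_k(\pi)$ via the periodic picture. Since $\phi_0=\mathsf{id}$ and $r_0(\pi)=0$ we may assume $1\le k<m$, and we write $(x_k,y_k)$ for the endpoint of the $k$-th horizontal step of $\pi$, so that $d_k=\frac mn y_k-x_k$. First I would record how $\phi_k$ acts on the statistics. A direct check shows that both translations used to build $\phi_k(\pi)$ --- namely $-(x_k,y_k)$ on $\pi_1$ and $(m-x_k,n-y_k)$ on $\pi_0$ --- change the quantity $\frac mn y-x$ of every lattice point by the same amount $-d_k$; hence the area word of $\phi_k(\pi)$ is the area word of $\pi$ read in the cyclically rotated order (the entries coming from $\pi_1$ first) with $d_k$ subtracted from each entry. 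A constant shift of the whole area word does not alter the inequalities defining the attack relation, and for $\gcd(m,n)=1$ two distinct North steps never have equal area-word entries, so the index tie-breaking in that definition is vacuous; consequently $\phi_k$ induces a bijection of attack relations, so $\maxtdinv(\phi_k(\pi))=\maxtdinv(\pi)$ and, for corresponding labellings, $\tdinv$ is preserved. Finally, since $\pi$ is a Dyck path we have $\min_i a_i(\pi)=0$, so the shift of $\phi_k(\pi)$ equals $d_k$ and $\area(\phi_k(\pi))=\sum_i\lfloor(a_i(\pi)-d_k)+d_k\rfloor=\area(\pi)$.

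Combining these facts with \Cref{thm:sweep} and the definition $\dinv=\tdinv+\cdinv+\#\{i:a_i<0\}$, the lemma becomes equivalent to the identity $\cdinv(\phi_k(\pi))+\#\{i:a_i(\phi_k(\pi))<0\}-\cdinv(\pi)=r_k(\pi)$; by the analysis in the proof of \Cref{thm:sweep}, the left-hand side is exactly the amount by which the total power of $q$ produced by rules $(C)$ and $(D)$ increases when passing from $\pi$ to $\phi_k(\pi)$. To evaluate this I would pass to the bi-infinite path $\hat\pi=\bigcup_{\ell\in\Z}\bigl(\pi+\ell(m,n)\bigr)$, invariant under $\tau:=(\,\cdot\,)+(m,n)$, and observe that $\widehat{\phi_k(\pi)}=\hat\pi$: the finite paths $\pi$ and $\phi_k(\pi)$ are the two length-one-period windows of $\hat\pi$ starting at $(0,0)$ and at $(x_k,y_k)$, translated back to the origin. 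Because the base diagonal of $\phi_k(\pi)$ is the main diagonal shifted by $d_k$, the condition selecting the lattice points swept by $\phi_k(\pi)$ becomes, after the translation, the same condition $my-nx\ge 0$ selecting the points swept by $\pi$; hence the two sweeps visit the same $\tau$-orbits of lattice points with the same local rule among $(A)$--$(E)$ at each point (the two exceptional orbits --- that of $(0,0)$ and that of the cut point --- being checked by hand). Therefore rules $(A)$, $(B)$, $(E)$ and the $d_\pm$-part of $(C)$ produce the same factor $t^{\area(\pi)}\sum_w q^{\tdinv(\pi,w)}x^w$ for both paths, and $\sweep(\phi_k(\pi))/\sweep(\pi)$ is $q$ to the power of the difference of the exponents coming only from the scalars $q^{\pm a}$ in rules $(C)$ and $(D)$.

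The main obstacle is this last computation. The parameter $a$ at a lattice point $p$ counts the North steps of the finite path lying to the right of $p$ that are crossed by the sweep line through $p$, and this quantity is \emph{not} $\tau$-invariant, precisely because ``lying to the right of $p$'' refers to the chosen window. One has to compare, for each visited orbit, the value of $a$ computed in the $\pi$-window with the value computed in the $\phi_k(\pi)$-window: these differ either because the $\phi_k$-window picks the $\tau$-translate of the representative (for orbits visited before the cut) or because it reshuffles the North steps of $\pi_0$ and of $\pi_1$ relative to the swept point (for orbits visited after the cut). Carrying out this bookkeeping and showing that the grand total equals $r_k(\pi)$ is the heart of the argument; the delicate point is that $a$ is naturally indexed by North steps whereas $r_k(\pi)$ is a signed count of \emph{horizontal} steps, so the identification needs a small matching between the North steps ``straddling'' a given level of the sweep line and the horizontal steps whose endpoint lies between the main diagonal and its translate through the cut point. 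An alternative route for this final step, which avoids the sweep geometry, is to use \Cref{thm:cdinv} to rewrite the target identity as $\pdinv(\phi_k(\pi))-\pdinv(\pi)=r_k(\pi)+\#\{i:a_i(\phi_k(\pi))<-\tfrac mn\}$ and then track how the partition $\mu$ lying above the path transforms under the cyclic rotation $\phi_k$; I expect this to require comparable effort.
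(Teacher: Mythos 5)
Your reduction is sound and coincides with the first step of the paper's argument: since the relative order in which the sweep line meets the lattice points of $\pi$ and of $\phi_k(\pi)$ is the same, the two sweeps differ only by the scalar powers of $q$ coming from rules $(C)$ and $(D)$, so the lemma is equivalent to the statement that this total $q$-exponent increases by exactly $r_k(\pi)$ when passing from $\pi$ to $\phi_k(\pi)$. Your observations about the rotated-and-shifted area word, the preserved attack relations, and the preserved area are all correct (and are subsumed in the paper by the single remark that the relative sweep order is unchanged).

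However, you stop precisely at the point where the actual content of the lemma lies. You yourself describe the comparison of the window-dependent parameter $a$ at each swept point, and the identification of the grand total with the signed count $r_k(\pi)$ of horizontal steps, as ``the heart of the argument'' and an ``obstacle'' --- but you never carry it out, and the alternative route via \Cref{thm:cdinv} is likewise only named, not executed. This is a genuine gap: without it you have only reformulated the lemma, not proved it. The paper closes exactly this gap by a chain of four explicit claims: it introduces, for each lattice point $p$, a signed count $a'(p)$ of intersections of the full line $\lambda(p)=\{p+u\cdot(m,n)\mid u\in\mathbb{R}\}$ with the vertical steps of the \emph{other} piece of the path, and shows that $a(\phi_k(\pi))-a(\pi)=\sum_p a'(p)$; it then groups North steps into maximal vertical segments $S_i$ (points between two East steps counting as segments of length $0$), translates them along the diagonal direction to segments $T_i$ with $x$-coordinate $0$, and shows that $\sum_p a'(p)=\sum_{i<k}\sum_{j\ge k}(\delta_{T_i\supset T_j}-\delta_{T_i\subset T_j})$; coprimality guarantees the endpoints of the $T_i$ are distinct, so this containment comparison rewrites as $\delta_{y_i<y_j}-\delta_{y_{i+1}<y_{j+1}}$ in terms of the bottom coordinates $y_i$ of the $T_i$; finally the double sum telescopes, using $y_0=y_m=0$ and $\delta_{i,j}=1-\delta_{j,i}$, to exactly $r_k(\pi)$ --- this last step is also where the bridge you worried about, from North-step data to a signed count of horizontal steps, is made, since each $y_i$ records the distance from the diagonal of the endpoint of the $i$-th horizontal step. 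Some version of this bookkeeping (or an equally detailed argument along your $\pdinv$ alternative) is indispensable for the proof to be complete.
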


\begin{proof}
    The relative order of points in $\pi$ and their images in $\phi(\pi)$ does not change when performing the sweep process.
    Therefore,
    \[
        \frac{\sweep(\phi(\pi))}{q^{a(\phi(\pi))}}= \frac{\sweep(\pi)}{q^{a(\pi)}},
    \]
    where $a(\pi)$ is the exponent of $q$ obtained by applying the sweep process.
    To conclude, we need to show that $a(\phi(\pi))=a(\pi) + r(\pi)$.
    
    Define $A_\pi, B_\pi, C_\pi, D_\pi$ as the sets of lattice points of $\pi$, different from the point $(m,n)$, that are between a $NE$, $EN$, $NN$, $EE$ pair of steps respectively.
    We consider the point $(0,0)$ to be preceded by a virtual East step, so $(0,0)\in B_\pi$ or $(0,0)\in D_\pi$ if the first step is a North or an East step respectively.
    
    Let $p$ be a lattice point of $\pi$.
    Define $a(p) \in \Z$ as the number of vertical steps that intersect the ray $\rho(p) \coloneqq \{p+u \cdot (m,n) \mid u \in \mathbb{R}_+\}$, multiplied by the following coefficient $\epsilon(p)$:
    \[
        \epsilon(p) =
        \begin{cases}
            0 & \text{if $p \in A_\pi \cup B_\pi$} \\
            -1 & \text{if $p \in C_\pi$} \\
            1 & \text{if $p \in D_\pi$}.
        \end{cases}
    \]
    By construction, we have that $a(\pi) = \sum_{p \in \pi} a_\pi(p)$.
    
    For a lattice point $p$ of $\pi$, denote by $l = l(p) \in \{0, 1\}$ the index such that $p$ is a point of $\pi_l$.
    For this purpose, the right endpoint of the $k$-th horizontal step is considered as a lattice point of $\pi_1$ (not $\pi_0$), whereas $(m,n)$ is not considered as a lattice point of $\pi_1$ (or of $\pi$).
    Define $a'(p) \in \Z$ as the number of vertical steps of $\pi_{1-l}$ that intersect the line $\lambda(p) \coloneqq \{p + u \cdot (m,n) \mid u \in \mathbb{R}\}$, multiplied by the following coefficient $\epsilon'(p)$:
    \[
        \epsilon'(p) =
        \begin{cases}
            0 & \text{if $p \in A_\pi \cup B_\pi$} \\
            (-1)^{l} & \text{if $p \in C_\pi$} \\
            (-1)^{l+1} & \text{if $p \in D_\pi$}.
        \end{cases}
    \]
    In other words: $a'(p)$ vanishes if $p \in A_\pi \cup B_\pi$; otherwise, $|a'(p)|$ is equal to the number of intersections between the line $\lambda(p)$ and vertical steps in the part of the path not containing $p$.
    
    \textbf{Claim 1:} $\displaystyle a(\phi(\pi)) - a(\pi) = \sum_{p \in \pi} a'(p)$.
    
    The intersections between rays $\rho(p) = \{p + u \cdot (m,n) \mid u \in \mathbb{R}_+\}$ and vertical steps in $\pi_{l(p)}$ are counted in both $a(\phi(\pi))$ and $a(\pi)$ (with the same sign), so they simplify.
    
    The remaining summands in $a(\phi(\pi))$ count the intersections between rays $\rho(p)$, where $p$ is in $\pi_1$, and vertical steps in $\pi_0$ (where $\pi_0$ is translated by $(m,n)$ so that it starts from $(m,n)$).
    Equivalently, they count the intersections between lines $\lambda(p)$ (where $p$ is in $\pi_1$) and vertical steps in $\pi_0$ (not translated).
    Therefore, their contribution is given by $\sum_{p \in \pi_1} a'(p)$.
    Note that the points in $C_\pi$ get a negative sign, as in the definition of $a_\pi (p)$.
    
    The remaining summands in $a(\pi)$ count the intersections between rays $\rho(p)$, where $p$ is in $\pi_0$, and vertical steps in $\pi_1$.
    Since $\pi_1$ comes after $\pi_0$, we can substitute the rays $\rho(p)$ with the lines $\lambda(p)$.
    Their contribution is given by $\sum_{p \in \pi_0} a'(p)$.
    
    \textbf{Intermezzo:}
    We refer to a maximal sequence of consecutive North steps as a \emph{vertical segment}. Each point in $D_\pi$ (i.e., between two East steps) is considered as a vertical segment of length $0$.
    This way, the path $\pi_0$ has $k$ vertical segments with $x$ coordinates equal to $0, \dotsc, k-1$, and the path $\pi_1$ has $m-k$ vertical segments with $x$ coordinates $k, \dotsc, m-1$.
    Denote by $S_i$ the $i$-th vertical segment.
    
    It is convenient to translate each vertical segment $S_i$ along the line $\{ u \cdot (m, n) \mid u \in \mathbb{R}\}$ so that its $x$ coordinate becomes $0$.
    We denote this translated segment by $T_i$.
    Let $y_i$ and $y_i'$ be the $y$ coordinates of the endpoints of $T_i$, with $y_i \leq y_i'$.
    Therefore, the $y$ coordinates of $S_i$ are $y_i + i \cdot \frac{n}{m}$ and $y_i' + i \cdot \frac{n}{m}$.
    Note that the endpoints of the $T_i$'s are all distinct because $m$ and $n$ are coprime.
    
    \textbf{Claim 2:} $\displaystyle \sum_{p \in \pi} a'(p) = \sum_{i < k} \sum_{j \geq k} ( \delta_{T_i \supset T_j} - \delta_{T_i \subset T_j} )$.
    
    Let us analyze the contributions to the left hand side due to the $i$-th and $j$-th vertical segments, for fixed $i < k$ and $j \geq k$.
    Let $h$ be the number of integral points $p \in S_i$
    (including the endpoints of $S_i$)
    such that $\lambda(p)$ intersects the $j$-th vertical segment $S_j$.
    
    If $T_i \supset T_j$, then $S_j$ has length $h$ and, for all its $h+1$ points $p'$, the line $\lambda(p')$ intersects $S_i$.
    Once we exclude the endpoints, $h-1$ points remain.
    On the other hand, the endpoints of $S_i$ are not among the $h$ points $p \in S_i$ such that $\lambda(p)$ intersects $S_j$.
    The overall contribution of $S_i$ and $S_j$ to the left hand side is $h-(h-1) = +1$.

    Note that if $T_i \supset T_j$ and $h=0$, then $S_j$ is a single point $p' \in D_\pi$ such that $\lambda(p')$ intersects $S_i$, so it contributes to the left hand side as $+1$.
    In other words, vertical segments of length $0$ can still be regarded as having $h-1 = -1$ integral points other than the endpoints.
    
    Similarly, if $T_i \subset T_j$, then the contribution is $-1$.
    Finally, if neither of $T_i$ and $T_j$ contains the other, $S_j$ also has $h$ points $p'$ such that $\lambda(p')$ intersects $S_i$, so the contribution is $0$.

    \textbf{Claim 3:} $\delta_{T_i \supset T_j} - \delta_{T_i \subset T_j} = \delta_{y_i < y_j}  - \delta_{y_{i+1} < y_{j+1}}$ (where we set $y_m = 0$).
    
    Clearly, we have $\delta_{T_i \supset T_j} - \delta_{T_i \subset T_j} = \delta_{y_i < y_j}  - \delta_{y_i' < y_j'}$.
    The top endpoint of $S_i$ has the same $y$ coordinate as the bottom endpoint of $S_{i+1}$, so $y_i' = y_{i+1} + \frac nm$.
    Similarly, $y_j' = y_{j+1} + \frac nm$, so $\delta_{y_i' < y_j'} = \delta_{y_{i+1} < y_{j+1}}$.
    
    \textbf{Claim 4:} $\displaystyle\sum_{i < k} \sum_{j \geq k} ( \delta_{y_i < y_j}  - \delta_{y_{i+1} < y_{j+1}} ) = r(\pi)$.
    
    Write $\delta_{i, j}$ as a shorthand for $\delta_{y_i < y_j}$.
    The left hand side simplifies to
    \begin{equation}
        \sum_{k \leq j < m} \delta_{0, j} + \sum_{0 < i < k} \delta_{i, k} - \sum_{k < j \leq m} \delta_{k, j} - \sum_{0< i < k} \delta_{i, m}
        = 1 + \sum_{0 \leq i < m} (\delta_{0, i} + \delta_{i, k} - 1),
        \label{eq:sums}
    \end{equation}
    where we have used the facts that $y_m = y_0 = 0$ and $\delta_{i, j} = 1 - \delta_{j,i}$ for $i \neq j$.
    
    If $y_k >0$, the final summation in \eqref{eq:sums} counts the horizontal steps of $\pi$ whose right endpoint lies strictly between the main diagonal and the translated diagonal $\{y_k + u \cdot (m,n) \mid u \in \mathbb{R}\}$.
    The $+1$ term can be interpreted as counting the final horizontal step which ends on the main diagonal.
    
    If $y_k < 0$, the final summation in \eqref{eq:sums} counts the same points with negative sign, but also has a $-2$ coming from the terms $i=0$ and $i=k$ (because $\delta_{0,k} = 0$).
    Then $-2+1 = -1$ counts the final horizontal step with negative sign.
    In all cases, the result is exactly $r(\pi)$.
\end{proof}

This completes the proof of \Cref{thm:coprime-case}.

\section*{Acknowledgements}

The first author would like to thank François Bergeron for suggesting to look into the combinatorics of $p_{m,n}$ and for the discussions on the topic.

The second author is partially supported by H2020 MSCA RISE project GHAIA -- n. 777822, and by National Science Foundation under Grant No. DMS-1929284.

\bibliographystyle{amsalpha}
\bibliography{references}

\end{document}